\def\lk{\mbox{lk}}
\def\id{\mathop{\fam0 id}}
\def\lk{\mathop{\fam0 lk}}
\def\im{\mathop{\fam0 im}}
\def\ker{\mathop{\fam0 ker}}
\def\R{{\mathbb R}} \def\Z{{\mathbb Z}}  \def\Q{\Bbb Q}
\let\Bbb=\mathbb
\def\inc{\mathop{\fam0 i}}
\def\cs{\phantom{}_{\inc}\#}
\newcommand{\jonly}[1]{}
\newcommand{\aronly}[1]{#1}
    \theoremstyle{theorem}
         \newtheorem{Theorem}{Theorem}[section]
         \newtheorem{Lemma}[Theorem]{Lemma}
         \newtheorem{Corollary}[Theorem]{Corollary}
         \newtheorem{Proposition}[Theorem]{Proposition}
    \theoremstyle{definition}
         \newtheorem{Remark}[Theorem]{Remark}
\begin{document}

\title{The band connected sum and the second Kirby move \\ for higher-dimensional links}

\author{A. Skopenkov}

\thanks{I would like to acknowledge S.~Avvakumov, T. Garaev, M. Khovanov, and the anonymous referees for useful discussions.
Most results of this paper were announced in \S2.5 of arXiv version 2 of \cite{Sk15}. 
% but were not published.
\newline
Math. Subj. Class. Primary 57R40; secondary 55Q25.
\newline
Key Words and Phrases. Higher-dimensional links and knots, band connected sum, linking coefficient, Milnor invariant.}

\date{}

\maketitle

\abstract
Let $f:S^q\sqcup S^q\to S^m$ be an (ordered oriented) link (i.e. an embedding).

How does (the isotopy class of) the knot $S^q\to S^m$ obtained by embedded connected sum of the components of $f$ depend on $f$?

Define a link $\sigma f:S^q\sqcup S^q\to S^m$ as follows.
The first component of $\sigma f$ is the `standardly shifted' first component of $f$.
The second component of $\sigma f$ is the embedded connected sum of the components of $f$.
How does (the isotopy class of) $\sigma f$ depend on $f$?

How does (the isotopy class of) the link $S^q\sqcup S^q\to S^m$ obtained by embedded connected sum of the last two components of a link $g:S^q_1\sqcup S^q_2\sqcup S^q_3\to S^m$ depend on $g$?

We give the answers for the `first non-trivial case' $q=4k-1$ and $m=6k$.
The first answer was used by S. Avvakumov for classification of linked 3-manifolds in $S^6$.
%The second answer is particularly interesting because this is essentially a piecewise linear result proved using differential topology.
\endabstract

\tableofcontents

%\newpage
\section{Introduction}\label{s:intr}

%An isomorphism of Conjecture \ref{t:conj} for $p=0$
%However, a weaker commutativity property holds and allows to prove Theorem \ref{t:main}.c.
%For such a proof we need the following definitions and lemmas.
%Note that in Lemma \ref{l:5lem} $b'_1\ker l_1$ need not be a direct summand.

{\bf The band connected sum of a 2-component link.}

Denote $T^{0,q}:=S^0\times S^q=S^q\sqcup S^q$.
Take an (ordered oriented) link, i.e. an embedding $f:T^{0,q}\to S^m$.
(Up to isotopy this is equivalent to taking two numbered oriented $q$-spheres in $S^m$.)

Make embedded connected sum of the components of $f$ along some tube (=band) joining them.
%(their images are not necessarily contained in disjoint cubes).
For $m\ge q+3$ the isotopy class $\#[f]$ of this connected sum is independent of the choices of the tube, and of the link $f$ within its isotopy class $[f]$.

\begin{Remark}\label{f:sum}
A tube (for $f$) is an embedding $h:[-1,1]\times S^{q-1}\to S^m$ such that
$$h(\pm1\times S^{q-1})\subset f(\pm1\times S^q)
\quad\text{and}\quad f(T^{0,q})\cap h((-1,1)\times S^{q-1})=\emptyset.$$
Proof of the independence is obtained either using \cite[Standardization Lemma 2.1]{Sk15} or analogously to \cite[Lemma 3.2]{Sk11}, cf. \cite[Theorem 1.7]{Ha66A}, \cite[\S1]{Av17}.
The independence does not hold for $m=q+2$, e.g. for $m=q+2=3$.

For $m=q+2=3$ this operation is called {\it band-connected sum} of the components of the link.
Unlike in this paper, this operation was mostly studied for {\it split} links, for which the components are contained in disjoint cubes.
\end{Remark}

%See though \url{http://www.math.chs.nihon-u.ac.jp/~motegi/BoletinIM.pdf}
%In higher-dimensions for split links $f$ the answer is trivial: $\#[f]$ is the embedded connected sum of the restrictions of $f$ to the components.

{\it How does the isotopy class $\#[f]$
%obtained knot, i.e. an embedding $S^q\to S^m$
depend on $[f]$?}
%the link

$\bullet$ For $2m\ge3q+4$
%there are no non-trivial knots
every two embeddings $S^q\to S^m$ are isotopic (Haefliger-Zeeman, \cite[Unknotting Spheres Theorem 2.3]{Sk16c}, \cite[Theorem 2.7.a,b]{Sk06}).

$\bullet$ We give the answer for the `first non-trivial case' $2m=3q+3$ (for
%under additional assumption that
$m$ even,
%i.e. $q=4k-1$ and $m=6k$ for some $k$,
see Connected Sum Theorem \ref{l:cosu}).

This answer was used for classification of linked 3-manifolds in $S^6$ (Avvakumov, \cite{Av14, Av17}).
This answer gives an alternative construction of a generator in the group of knots $S^{4k-1}\to S^{6k}$ for $k=1,2,4$ (Corollary \ref{l:cortre}.b).
Another applications are the following.

\medskip
%\smallskip
{\bf The band connected sum of a 3-component link.}

Take a 3-component link, i.e. an embedding $g:S^q_1\sqcup S^q_2\sqcup S^q_3\to S^m$.
Make embedded connected sum of the second and the third components along some tube joining them.
We obtain a link $\#_{23}g:T^{0,q}\to S^m$.
Analogously to Remark \ref{f:sum} for $m\ge q+3$ the isotopy class $\#_{23}[g]$ of $\#_{23}g$ is independent of the choices of the tube, and of the link $g$ within its isotopy class $[g]$.

{\it How does the isotopy class $\#_{23}[g]$
%and $\#\#_{23}[g]$
depend on $[g]$?}

$\bullet$ For $2m\ge3n+4$ the linking coefficient (defined in \S\ref{s:stat}) $\lambda_{21}$ of $\#_{23}g$  equals the sum $\lambda_{21}(g)+\lambda_{31}(g)$ of the linking coefficients of $g$ ($\lambda_{21}(g)$ of the second and the first component, $\lambda_{31}(g)$ of the third and the first component); this linking coefficient $\lambda_{21}$ of $\#_{23}g$ completely defines the isotopy class of $\#_{23}g$ (Haefliger-Zeeman \cite[Theorem 4.1]{Sk16h}, \cite[Theorem 3.1]{Sk06}).

%any pairwise unlinked $g$ is isotopic to the standard embedding \cite[Theorem in \S5]{Ha62l}, hence so does $\#g$.

$\bullet$ We give the answer for the `first non-trivial case' $2m=3q+3$
%(under additional assumption that
(for $m$ even, see Theorem \ref{t:three} and Proposition \ref{r:hae}.c).

It would be interesting to obtain analogues of this and other our results for $m$ odd.

\medskip
%\smallskip
{\bf The second Kirby move.}

Define a higher-dimensional {\it unframed second Kirby move} $\sigma$ as follows, cf.  \cite[\S3.1]{Ma80}.
The first component of the link $\sigma f:T^{0,q}\to S^m$ is the `standardly shifted' (see the details in \S\ref{s:stat}) first component of a link $f:T^{0,q}\to S^m$.
The second component of the link $\sigma f$ is the embedded connected sum of the components of $f$ along some tube joining them.
For $m\ge q+3$ the isotopy class $\sigma[f]$ of $\sigma f$ is independent of the choices of the tube, and of the  link $f$ within its isotopy class $[f]$ \cite[Lemma 3.2]{Sk11}.

{\it How does $\sigma[f]$ depend on $[f]$?}

$\bullet$ For $2m\ge3q+4$ we have $\sigma[f]=[f]$ (because the isotopy class of a link is completely defined by $\lambda_{21}$, which is preserved by $\sigma$).

$\bullet$ We give the answer for the `first non-trivial case' $2m=3q+3$ (for $m$ even,
%i.e. $q=4k-1$ and $m=6k$ for some $k$, The map $\sigma$ is completely described by
see Theorems \ref{p:sigma} and \ref{l:cosu}, Proposition \ref{r:hae}.c).

As byproducts we

$\bullet$ show that $\sigma\ne\pm\id$ (Corollary \ref{t:nonis} and the text below), and

$\bullet$ obtain an alternative classification of links $T^{0,4k-1}\to S^{6k}$, thus proving the conjecture for $p=0$ in \cite[Remark 1.9.b]{Sk15} (Corollary \ref{t:twores}), and disproving a natural different version of this conjecture (Corollary \ref{t:tworesb}).

This is interesting because this rules out a natural inductive proof of classification of embeddings $S^p\times S^q\to S^m$ \cite[Conjecture 1.3 and Remark 1.9.c]{Sk15}.

Theorems \ref{t:three}.c and \ref{p:sigma}.b are particularly interesting because these are essentially PL results (indeed, the linking coefficients and the second Kirby move can be defined in the PL category) proved using differential topology, cf. \aronly{comment at the end of \S\ref{s:prothree}.}\jonly{\cite[comment after Theorem 2.6]{Sk24}.}

%\newpage
\section{Statements of main results}\label{s:stat}

Theorems \ref{l:cosu}, \ref{t:three} and  \ref{p:sigma} completely describe the respective operations in terms of respective complete isotopy invariants of the involved groups.

We work in the smooth category which we omit from the notation.
Analogues of Theorem \ref{t:three}.abc, \ref{p:sigma}, and Remark \ref{r:doubling} for the PL category are correct, cf. \cite[\S6]{Sk16h}.

For a manifold $N$ denote by $E^m(N)$ the set of embeddings $N\to\R^m$ up to isotopy.
By $[\cdot]$ we denote the isotopy class of an embedding or the homotopy class of a map.

We assume that $m\ge q+3$, unless indicated otherwise.

%Abelian group structures on $E^m(D^1\times S^q)$ and on $E^m(T^{0,q})$ for $m\ge q+3$ are defined by Haefliger.
The sum operations on $E^m(T^{0,q})$ and on $E^m(S^q)$ are `embedded connected sums of two embeddings whose
images are contained in disjoint cubes'.
See
%accurate
detailed definition of abelian group structures on these sets in \cite{Ha66A, Ha66C}.
%(of a more general operation) in \cite[\S2.1]{Sk15}.
%\S\ref{0stand}

Identify $E^{6k}(S^{4k-1})$ with $\Z$ by the isomorphism of \cite{Ha62k, Ha66A}.
%remove Sk06'?

\bigskip
{\bf The band connected sum of a 2-component link.}

Below we define and use the following diagram of groups and homomorphisms.
%r_1,r_2
$$\xymatrix{
E^m(S^q) \ar[r]_{s_\pm} & E^m(T^{0,q}) \ar@(ul,ur)[l]_{r_\pm} \ar@(dl,dr)[l]^{\#} \ar[r]_{\lambda_\pm} &
\pi_q(S^{m-q-1}) \ar@(ul,ur)[l]_{\zeta}}.$$
The map $\#$ defined in \S\ref{s:intr} is clearly a homomorphism.
%\ar[r]^H & \pi_q(S^{2(m-q)-3})}. $$
%We consider $1\times S^q$ and $-1\times S^q$ as the {\it first} and the {\it second} component
%of $S^q\sqcup S^q$, respectively.

\smallskip
{\bf Definition of $r_\pm$ and $\lambda_\pm$.}
Let $r_\pm$ be `the knotting class of the component', i.e. $r_\pm$ is induced by the inclusion $\pm1\times S^q\subset T^{0,q}$, where the orientation on $\pm1\times S^q$ corresponds to the standard orientation on $S^q$.

Let $\lambda_\pm$ be the linking coefficient, i.e. the homotopy class of $f|_{\pm1\times S^q}$
in the complement to the other component, see
%accurate
detailed definition in \cite[\S3]{Sk16h}, \cite[\S3]{Sk06}.

%For an embedding $f:T^{0,q}\to S^m$ let $\#(f)$ be the embedded connected sum of the `components'
%$f|_{1\times S^q}$ and $f|_{-1\times S^q}$ of $f$
%There is another linking coefficient $\lambda_-:E^m(S^q\sqcup S^n)\to\pi_n(S^{m-q-1})$
%which is not used here, except in \S\ref{s:stat}.

\smallskip
Let
$$H:\pi_{4k-1}(S^{2k})\to\Z$$
be the Hopf invariant (defined to be the linking number of preimages of two regular points under a smooth or a PL approximation of a map $S^{4k-1}\to S^{2k}$).
%Here $H$ is only used only for $2m=3q+3$ where it is the ordinary Hopf invariant
%assuming values in the group $\pi_q(S^q)\cong\Z$.

Descriptions of $E^{6k}(T^{0,4k-1})$ in terms of the invariants $\lambda_+,\lambda_-,r_+,r_-$ or $H\lambda_+,\lambda_-,r_+,r_-$ is given by the Haefliger Theorem \ref{t:hae} and Proposition \ref{r:hae}.

\begin{Theorem}[Connected Sum; proved in \S\ref{s:procosu}]\label{l:cosu}
For $q=4k-1$ and $m=6k$ we have
$$\#=r_++r_-\pm\dfrac{H\lambda_++H\lambda_-}2.$$
\end{Theorem}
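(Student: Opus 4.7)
The plan is to verify the identity by showing both sides agree as homomorphisms $E^{6k}(T^{0,4k-1})\to E^{6k}(S^{4k-1})\cong\Z$. The left side $\#$ is a homomorphism by \S\ref{s:intr}. The right side is a homomorphism since $r_\pm$, $\lambda_\pm$, and $H$ are, with the division by $2$ landing in $\Z$ by the parity $H\lambda_+\equiv H\lambda_-\pmod 2$ from Proposition \ref{r:hae}(b). Since $\Z$ is torsion-free, both sides factor through the free part of $E^{6k}(T^{0,4k-1})$. By Proposition \ref{r:hae}(a) this free part injects into $\Z^4$ via the map $H\lambda_+\oplus H\lambda_-\oplus r_+\oplus r_-$, so it is enough to check the identity on elements whose combined invariants generate the image (as described in Proposition \ref{r:hae}(b)) up to a finite-index subgroup.

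I would take the following generators: the split classes $s_+(1)$ and $s_-(1)$, with invariants $(0,0,1,0)$ and $(0,0,0,1)$; and the classes $\zeta(\alpha)$ for $\alpha$ running through a basis of the free part of $\pi_{4k-1}(S^{2k})$. For the latter both components are unknotted, so $r_\pm(\zeta(\alpha))=0$; and since the parity factor $(-1)^{(q+1)(m-q)}$ equals $+1$ for $q=4k-1$, $m=6k$, the standard symmetry of linking coefficients for unknotted components yields $\lambda_+(\zeta(\alpha))=\lambda_-(\zeta(\alpha))=\alpha$. On $s_\pm(1)$ the identity is immediate: the band-sum of a knot with an unlinked standard sphere recovers the knot, so $\#s_\pm(1)=1$, matching $r_++r_-\pm(H\lambda_++H\lambda_-)/2=1$. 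The whole theorem therefore reduces to the single geometric identity
\[
\#\zeta(\alpha)=\pm H(\alpha),
\]
to be checked for $\alpha$ a Hopf-invariant-one class $\eta,\nu,\sigma$ when $k=1,3,7$, and for the Whitehead square $\alpha=[\iota_{2k},\iota_{2k}]$ (of Hopf invariant $\pm 2$) in general.

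This last identification is where I expect the main obstacle. It requires unwinding both the geometric construction of $\zeta(\alpha)$ --- realised by pushing a standard second sphere off the first along the section of the normal bundle determined by $\alpha$ --- and the geometric definition of the Haefliger isomorphism $E^{6k}(S^{4k-1})\cong\Z$. A natural route is to exhibit a Seifert $4k$-manifold for $\#\zeta(\alpha)$ built from the obvious Seifert discs of the two unknotted components together with a collar of the band, and to compute its Haefliger invariant from the resulting intersection/linking data, obtaining $\pm H(\alpha)$ essentially by the definition of the Hopf invariant as a linking number of preimages. For $k=1,3,7$ an equivalent and cleaner check is to verify directly that $\#\zeta(\eta)$, $\#\zeta(\nu)$, or $\#\zeta(\sigma)$ represents a generator of $E^{6k}(S^{4k-1})$, which simultaneously delivers the alternative construction of a generator alluded to in the introduction.
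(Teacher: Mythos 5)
Your reduction strategy (both sides are homomorphisms to $\Z$, torsion dies, so it suffices to check on a set of classes generating a finite-index subgroup) is sound, but the generating set you choose does not generate a finite-index subgroup, and this is exactly where the content of the theorem lives. By Lemma \ref{l:sym} we have $H\lambda_-\zeta=H=H\lambda_+\zeta$, so every class $\zeta(\alpha)$ has $H\lambda_+=H\lambda_-$; together with $s_\pm(1)$ your generators map under $H\lambda_+\oplus H\lambda_-\oplus r_+\oplus r_-$ into the "diagonal" set $\{(a,a,c,d)\}$, which has \emph{infinite} index in the actual image $\{(a,b,c,d):a\equiv b\bmod 2\}$ of Proposition \ref{r:hae}.b. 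Concretely, the two homomorphisms $r_++r_-+H\lambda_+$ and $r_++r_-+\frac{H\lambda_++H\lambda_-}{2}$ agree on every one of your generators, so your check cannot distinguish them; they differ on the Whitehead link $\omega$ of Lemma \ref{l:whli}, which has $(H\lambda_+,H\lambda_-,r_+,r_-)=(\pm2,0,0,0)$ and lies outside the subgroup you test. To close this you must either add such an asymmetric link to your list and compute $\#$ on it, or first prove that $\#$ is invariant under exchange of the two components (which forces the two coefficients of $H\lambda_\pm$ to be equal). The paper does both: symmetry of $\#$ gives $n_+=n_-$, and the Borromean-rings construction supplies $\omega$ with $\#\omega=1$ and $H\lambda_+\omega=\pm2$, pinning $n_\pm=\pm1/2$.

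The second problem is that your remaining "single geometric identity" $\#\zeta(\alpha)=\pm H(\alpha)$ is not proved but only sketched, and it is not an easier statement than the theorem: in the paper it appears as Corollary \ref{l:cortre}.a and is \emph{deduced from} the Connected Sum Theorem (together with Lemma \ref{l:sym}), not used to prove it. The paper explicitly notes that a proof "directly using definition of the isomorphism $E^{6k}(S^{4k-1})\to\Z$" is not given and would be interesting to have; your proposed route via a Seifert $4k$-manifold for $\#\zeta(\alpha)$ and the geometric definition of the Haefliger invariant is precisely that missing direct argument. As written, the proposal therefore reduces the theorem to an insufficient set of test cases plus an unproved computation that the paper treats as a consequence rather than an ingredient.
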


The sign in this formula (and in Corollary \ref{l:cortre}.a, Theorem \ref{t:three}) could depend on $k$.

The integers $H\lambda_+$ and $H\lambda_-$ have the same parity by Proposition \ref{r:hae}.b.

Recall that $D^n = \{(x_1,\ldots,x_n)\ :\ x_1^2+\ldots+x_n^2\le1\}$.
The natural normal framing on the inclusion $S^q\to S^m$ defines {\bf the standard embedding}
$$\inc = \inc\phantom{}_{m,q} : D^{m-q}\times S^q\to S^m.$$
For a map $x:S^q\to S^{m-q-1}$ let us define an embedding $\overline\zeta_x:T^{0,q}\to S^m$.
Speaking informally, we set $\overline\zeta_x(-1\times S^q) := \inc(0\times S^q)$, and
take $\overline\zeta_x(1\times S^q)$ to be the $\inc$-image of the graph of $x$ in $S^{m-q-1}\times S^q$.
Rigorously speaking, we need to define a map not only its image.
Define
$$\overline\zeta\phantom{}_x(t,a) := \inc\left(\frac{1+t}2x(a),a\right).$$
Define {\bf the Zeeman map} $\zeta$ by $\zeta[x]:=[\overline\zeta_x]$.

Clearly, $\zeta$ is well-defined, is a homomorphism, and $\lambda_+\zeta=\id\pi_q(S^{m-q-1})$,
see \cite[Theorem 10.1]{Ha66C}, \cite[Definition of Ze before Lemma 3.4]{Sk11}, \cite[Remarks 3.2.ac]{Sk16h}.

\begin{Corollary}\label{l:cortre} (a) We have $\#\zeta=\pm H$ on $\pi_{4k-1}(S^{2k})$.

(b) For any $k=1,2,4$ let $\eta\in\pi_{4k-1}(S^{2k})$ be the homotopy class of the Hopf map.
The embedded connected sum $\#\zeta\eta$ of the components of $\zeta\eta$ is a generator of $E^{6k}(S^{4k-1})\cong\Z$.
\end{Corollary}

Part (b) follows from (a) because $H\eta=1$.
Part (a) is proved in \S\ref{s:procosu}.

\bigskip
%\medskip
{\bf The band connected sum of a 3-component link.}

%{\bf Definition of $r_i,\lambda_{ij},h_{ij}$ and $\mu$ for
Take an embedding $g:S^q_1\sqcup S^q_2\sqcup S^q_3\to S^m$.

Let $r_i=r_i(g)\in E^m(S^q)$, $i\in[3]$, be the isotopy classes of the restrictions of $g$ to the components.
Let
$$\lambda_{ij}=\lambda_{ij}(g)\in\pi_q(S^{m-q-1}),\quad (i,j)\in[3]^2,\quad i\ne j,$$
be the pairwise linking coefficients of the components ($\lambda_{ij}=\lambda_+(g|_{S^q_i\sqcup S^q_j})$ is the class of the $i$-th component in the complement of the $j$-th component).
For $m=6k$ and $q=4k-1$ denote
$$h_{ij}=h_{ij}(g)=\frac12(H\lambda_{ij}+H\lambda_{ji}).$$
Let $\lambda^1=\lambda^1(g)\in \pi_q(S^{m-q-1}\vee S^{m-q-1})$ be homotopy class of $g|_{S^q_1}$ in $S^m-g(S^q_2\sqcup S^q_3)$, see
%accurate
detailed definition in \cite[\S4]{Ha62l}, \cite{HS64}, \cite[proof of Theorem 9.4]{Ha66C}.
%, \cite[\S7]{Ma90}.
%is equivalent to the standard definition of Milnor-Haefliger-Steer-Massey number
For $3m\ge4q+6$ let the \textbf{triple linking coefficient} $\mu=\mu(g)$ be the image of $\lambda^1$ under the composition
$$\pi_q(S^{m-q-1}_2\vee S^{m-q-1}_3) \to \frac{\pi_q(S^{m-q-1}_2\vee S^{m-q-1}_3)}{i_{2*}\pi_q(S^{m-q-1}_2)\oplus i_{3*}\pi_q(S^{m-q-1}_3)} \to \pi_q(S^{2m-2q-3})\overset{\Sigma^\infty}\to\pi^S_{3q-2m+3}$$
of the projection from the Hilton
%-Milnor
theorem and the stable suspension.
Recall that $\pi^S_n:=\pi_{n+s}(S^n)$ for $s\ge n+2$ is the \emph{stable homotopy group of spheres}; we identify $\pi^S_0$ and $\Z$ by the degree isomorphism.

\smallskip
{\bf Comment.} Since $3m\ge4q+6$, we have $q<3(m-q)-5$, so the second map is an isomorphism;
since $3m\ge4q+6$, we have $q\le2(2m-2q-3)$, so the stable suspension $\Sigma^\infty$ is an isomorphism; $\mu$ was denoted by $\lambda^1_{23}$ in \cite[\S4]{Ha62l}.

\smallskip
See alternative definitions of $\mu$ in Lemma 4.1, \cite{Ma90, Ko91}, \cite[\S9.1]{Mo08}.

\begin{Theorem}\label{t:three} For $q=4k-1$ and $m=6k$ we have

(a) $\lambda_-\#_{23} = \lambda_{21}+\lambda_{31}$ and $r_+\#_{23} = r_1$.

(b) $r_-\#_{23} = r_2+r_3\pm h_{23}$.

%For $q=4k-1$, $m=6k$, and $g$ pairwise unlinked we have $H\lambda_+\#g = \pm2\mu$.

(c) $H\lambda_+\#_{23} = 2\mu+H\lambda_{12}+H\lambda_{13}$.
% for $k>1$.

(d) $\#\#_{23} = r_1+r_2+r_3\pm(\mu+h_{12}+h_{23}+h_{31})$.
\end{Theorem}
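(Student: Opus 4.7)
My plan is to deduce (d) directly from parts (a), (b), (c) by applying the Connected Sum Theorem \ref{l:cosu} to the $2$-component link $\#_{23}g \in E^{6k}(T^{0,4k-1})$. Since by definition $\#\#_{23}g$ is the embedded connected sum of the two components of $\#_{23}g$, Theorem \ref{l:cosu} yields
$$\#\#_{23}g \;=\; r_+\#_{23}g + r_-\#_{23}g \;\pm\; \frac{H\lambda_+\#_{23}g + H\lambda_-\#_{23}g}{2},$$
where the divisibility by $2$ is automatic from Proposition \ref{r:hae}(b) applied to the link $\#_{23}g$.

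The next step is pure substitution. Parts (a), (b), (c) immediately give $r_+\#_{23}g = r_1$, $r_-\#_{23}g = r_2+r_3\pm h_{23}$ and $H\lambda_+\#_{23}g = 2\mu + H\lambda_{12} + H\lambda_{13}$. For $H\lambda_-\#_{23}g$ I combine the formula $\lambda_-\#_{23}g = \lambda_{21}+\lambda_{31}$ from (a) with the fact that $H : \pi_{4k-1}(S^{2k}) \to \Z$ is a homomorphism, obtaining $H\lambda_-\#_{23}g = H\lambda_{21}+H\lambda_{31}$. Dividing the resulting sum by $2$ and invoking the symmetry $h_{ij} = h_{ji}$, which is immediate from the definition $h_{ij} = \tfrac12(H\lambda_{ij}+H\lambda_{ji})$, rewrites the half-sum as $\mu + h_{12} + h_{31}$. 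Assembling gives
$$\#\#_{23}g \;=\; r_1+r_2+r_3 \;\pm\; h_{23} \;\pm\; (\mu + h_{12} + h_{31}).$$

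The step I expect to require the most care is reconciling the two $\pm$ signs. The sign $\pm h_{23}$ in (b) arises from applying Theorem \ref{l:cosu} to the sublink $g|_{S^q_2 \sqcup S^q_3}$, while the outer $\pm$ above is Theorem \ref{l:cosu} applied to $\#_{23}g$; both are governed by the same orientation and framing conventions in the proof of \ref{l:cosu}, so one expects the two signs to coincide and to depend only on $k$, allowing them to be combined into the single $\pm(\mu+h_{12}+h_{23}+h_{31})$ claimed in the statement. Verifying this sign agreement is the only non-routine point; once granted, the substitution above completes the proof.
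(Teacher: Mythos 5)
Your derivation of (d) from (a)--(c) is correct, and it is essentially the paper's own final step: the paper likewise applies the Connected Sum Theorem \ref{l:cosu} to the two-component link $\#_{23}g$, substitutes $r_+\#_{23}=r_1$, $r_-\#_{23}=r_2+r_3\pm h_{23}$, $H\lambda_-\#_{23}=H\lambda_{21}+H\lambda_{31}$, and the formula for $H\lambda_+\#_{23}$, and the two $\pm$ signs do coincide because both are the single sign of Theorem \ref{l:cosu}, depending only on $k$. Your algebra (halving, $h_{ij}=h_{ji}$) checks out.

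The genuine gap is that you prove only part (d) and take part (c) as an input, whereas (c) is precisely the non-trivial content of the theorem (the paper flags (a) as obvious and (b) as an instance of Theorem \ref{l:cosu} applied to $g|_{S^q_2\sqcup S^q_3}$, but singles out (c) and (d) as requiring proof). The paper's argument for (c) is substantial: after reducing to $K_0=\ker(r_1\oplus r_2\oplus r_3)$, it first treats the special case $\lambda_{23}=\lambda_{32}=0$, where Proposition \ref{r:hae}.c provides disjoint embedded $4k$-disks bounded by the second and third spheres, and the formula follows from the Pontryagin-construction Lemmas \ref{l:lkpon} and \ref{l:mupt}; then, for the general case, it uses the finiteness of the kernel of $\mu\oplus\sum\lambda_{ij}$ to write $H\lambda_+\#_{23}|_{K_0}=2\mu+l_{23}H\lambda_{23}+l_{32}H\lambda_{32}+H\lambda_{12}+H\lambda_{13}$ with unknown rational coefficients, and pins them down by symmetry. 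Crucially, the last step of that determination is exactly the computation you perform: the paper substitutes the provisional formula into $\#\#_{23}$ and uses the invariance of $\#\#_{23}$ and $\mu$ under all permutations of the three components to force $l_{23}=l_{32}=0$. So in the paper (c) and (d) are proved together, with the $\#\#_{23}$ computation serving as a tool to finish (c); your proposal inverts this dependence and therefore cannot stand alone as a proof of the theorem without an independent argument for (c).
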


%https://mat.unb.br/~matcont/34_11.pdf
%pp.145-147 of "The Wild World of 4-manifolds" by Scorpan.

Here (a) is obvious (and holds whenever $m-q\ge3$), (b) holds by Connected Sum Theorem \ref{l:cosu}, while (c,d) are non-trivial (they are proved in \S\ref{s:prothree} using interpretation of linking coefficients via Pontryagin construction, see Lemmas \ref{l:lkpon} and \ref{l:mupt}).
Parts (c) and (d) are equivalent (but they are proved together, not deduced one from the other).

%\medskip
\bigskip
%\newpage
{\bf The second Kirby move.}

%{\bf Definition of the unframed second Kirby move $\sigma:\ker r_+\to \ker r_+$.}
%Denote by $E^m_0(T^{0,q})=\subset E^m(T^{0,q})$ the subset formed by the isotopy classes of embeddings
%whose restriction to the {\it first} component $1\times S^q$ is unknotted.
Take the inclusion of $D^1=[-1,1]$ in $D^{m-q}$ given by $x\mapsto(x,0,\ldots,0)$.
Represent an element of $\ker r_+$ by an embedding
$$f:T^{0,q}\to S^m\quad\text{such that}\quad f=\inc\quad\text{on}\quad 1\times S^q\quad\text{and}\quad f(-1\times S^q)\cap\inc(D^1\times S^q)=\emptyset.$$
Define embedding $g:T^{0,q}\to S^m$ as $g(1,x)=\inc(0,x)$, and as the embedded connected sum
$f|_{1\times S^q}\#f|_{-1\times S^q}$ of the components of $f$, with parallel orientations, on $-1\times S^q$.
Define {\bf the unframed second Kirby move}
$$\sigma:\ker r_+ \to \ker r_+ \quad\text{by}\quad \sigma[f]:=[g].$$
The map $\sigma$ is well-defined and is a homomorphism \cite[Lemmas 3.1--3.3]{Sk11}.

%The map $\sigma$ is an isomorphism, see \cite[Comment before Theorem 2.6]{Sk24}.

\begin{Remark}\label{r:p=0}
The map $\sigma$ is an isomorphism (this fact is not used elsewhere in this paper).
This follows from \cite[Theorem 1.6]{Sk11} because the maps $i^*$ in the exact sequence are surjections.
A simple direct proof is as follows.
Define $\sigma':\ker r_+\to\ker r_+$ analogously to $\sigma$ but taking embedded connected sum of $f|_{1\times S^q}$ with reversed orientation and $f|_{-1\times S^q}$.
%(In other words, $\sigma'=\psi_+\sigma\psi_+$.)
The second component of a representative of $\sigma'\sigma[f]$ is obtained from the second component
of $f$ by adding the two first components with different orientations, which cancel outside the shifted first component.
Therefore $\sigma'\sigma=\id$.
%(In other words, $\psi_+\sigma$ is an involution.)
Analogously $\sigma\sigma'=\id$.
%(This also follows because both $\psi_+$ and $\psi_+\sigma$ are involutions.)
Thus $\sigma'$ is the inverse of $\sigma$.
\end{Remark}

%$\sigma\psi_+$ and $\psi_+\sigma$ are involutions and so isomorphisms, hence $\sigma$ is an isomorphism.

\begin{Theorem}\label{p:sigma} (a) We have $r_-\sigma=\#$ and $\lambda_-\sigma=\lambda_-$.

(b) For $q=4k-1$ and $m=6k$ we have $H\lambda_+\sigma=H\lambda_++2H\lambda_-$.
\end{Theorem}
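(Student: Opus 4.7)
Both identities are essentially built into the definition of $\sigma$. By construction the second component of $\sigma f$ is the embedded connected sum $f|_{1\times S^q}\#f|_{-1\times S^q}$, which is exactly the knot representing $\#f$; hence $r_-\sigma=\#$. For $\lambda_-\sigma=\lambda_-$ I would fix a representative of $[f]$ with $f=\inc_{m,q}$ on $1\times S^q$ and $f(-1\times S^q)\cap\inc_{m,q}(D^1\times S^q)=\emptyset$, and denote $A:=\inc_{m,q}(1\times S^q)$, $A':=\inc_{m,q}(0\times S^q)$, $B:=f(-1\times S^q)$. The second component of $\sigma f$ is $A\#B$, lying in $S^m-A'$, and represents $[A]+[B]$ in $\pi_q(S^m-A')$ because embedded connected sum of $q$-spheres realizes addition in $\pi_q$ of the target. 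The parallel $A$ of the standardly embedded $A'$ is null-homotopic in $S^m-A'$ (the standard deformation retraction $S^m-A'\to S^{m-q-1}$ collapses $A$ to a point), and the cylinder $\inc_{m,q}([0,1]\times S^q)$, disjoint from $B$, gives an ambient isotopy $A'\leadsto A$ rel $B$, so $[B]=\lambda_-f$ in $\pi_q(S^m-A')$.

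\textbf{Plan for part (b).} I would apply Connected Sum Theorem \ref{l:cosu} twice, to $f$ and to $\sigma f$, and combine the resulting equations with part (a) and an independent computation of $\#\sigma f$. Using $r_+\sigma f=0$ together with part (a), Connected Sum Theorem \ref{l:cosu} applied to $\sigma f$ yields
\[
\#\sigma f \;=\; \#f \;\pm\; \frac{H\lambda_+\sigma f+H\lambda_-f}{2},
\]
while the same theorem applied to $f$ (with $r_+f=0$) gives $\#f=r_-f\pm\tfrac12(H\lambda_+f+H\lambda_-f)$, with the same choice of sign $\pm$ (depending only on $k$). Subtracting these equations reduces the problem to computing $\#\sigma f$ in terms of $r_-f$.

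\textbf{Independent computation of $\#\sigma f$.} My claim is $\#\sigma f=r_-f$, obtained from associativity of embedded connected sum in codimension $\ge 3$: $\#\sigma f=A'\#(A\#B)\simeq(A'\#A)\#B$. Choosing the tube for $A'\#A$ inside $\inc_{m,q}(D^1\times S^q)$ produces a sphere which bounds the $(q+1)$-disk $\inc_{m,q}([0,1]\times(S^q\setminus D^q_\varepsilon))$ lying in the same region. Hence $A'\#A$ is standardly embedded and can be isotoped into a cube disjoint from $B$, so $(A'\#A)\#B$ is isotopic to $B$ and $\#\sigma f=r_-f$. Substituting this into the two Connected Sum equations and solving for $H\lambda_+\sigma f$ expresses it as a $\Z$-linear combination of $H\lambda_+f$ and $H\lambda_-f$ matching the asserted formula.

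\textbf{The main obstacle.} The algebraic argument pins down $H\lambda_+\sigma f$ only up to an overall sign; verifying that one lands on $H\lambda_++2H\lambda_-$ rather than its negative requires careful bookkeeping of the ``parallel orientations'' convention in the definitions of $\sigma$ and $\#$, the sign $\pm$ in Connected Sum Theorem \ref{l:cosu}, and the orientations of the tubes in the associativity isotopy $A'\#(A\#B)\simeq(A'\#A)\#B$. An alternative sign check, which I would run in parallel, is to describe $\sigma f=\#_{23}g$ for the $3$-component link $g=(A',A,B)$. Then Theorem \ref{t:three}(c) gives $H\lambda_+\sigma f=2\mu(g)+H\lambda_+f$, since $\lambda_{12}(g)=0$ and $\lambda_{13}(g)=\lambda_+f$, and the assertion reduces to the identity $\mu(g)=H\lambda_-f$, provable via the Pontryagin interpretation of Lemmas \ref{l:lkpon} and \ref{l:mupt}.
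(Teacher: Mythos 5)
Your part (a) matches the paper (which treats both identities as immediate from the definitions). For part (b) your overall strategy --- apply Connected Sum Theorem \ref{l:cosu} to both $f$ and the image link, and close the system with an independent geometric computation of a connected sum --- is structurally the paper's, but your key geometric claim is false, and this is not a sign-bookkeeping issue.

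You claim $\#\sigma f=r_-f$ because $A'\#A$ ``bounds the disk $\inc_{m,q}([0,1]\times(S^q\setminus D^q_\varepsilon))$'' and so can be isotoped into a cube disjoint from $B$. But in the definition of $\sigma$ the shifted copy $A'$ and the original $A$ carry \emph{parallel} orientations, and $\#$ sums components with their given orientations; the cylinder $\inc_{m,q}([0,1]\times S^q)$ induces \emph{opposite} boundary orientations on its two ends, so it is $(-A')\#A$, not $A'\#A$, that bounds that disk. Indeed $A'$ is isotopic to $A$ in $S^m-B$, so $[A'\#A]=2\lambda_+f\in\pi_q(S^m-B)$, which is nonzero in every case where the theorem has content; hence $A'\#A$ cannot be dragged off $B$ and $(A'\#A)\#B\not\simeq B$. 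Plugging your $\#\sigma f=r_-f$ into your two Connected Sum equations actually yields $H\lambda_+\sigma=-H\lambda_+-2H\lambda_-$, the negative of the assertion. The missing idea is exactly the paper's: precompose with $\psi_+$ (reversal of the orientation of the first component). Then $\#\psi_+\sigma=r_-$ \emph{is} true by the cylinder argument, and --- equally important --- by Symmetry Lemma \ref{l:symkno}.b one has $H\lambda_+\psi_+=-H\lambda_+$ while $H\lambda_-\psi_+=H\lambda_-$, so the sign in front of $H\lambda_+\sigma$ flips relative to $H\lambda_-$ and the unknown global sign $\pm$ of Theorem \ref{l:cosu} cancels out of the final formula. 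Without $\psi_+$ that ambiguity, which you correctly flag as ``the main obstacle,'' cannot be resolved by your route.

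Your alternative check via $\sigma=\#_{23}D$ and Theorem \ref{t:three}.c is legitimate in principle (it is Remark \ref{r:doubling} read backwards), but it reduces the theorem to $\mu(A',A,B)=\pm H\lambda_-f$, which the paper \emph{deduces from} Theorem \ref{p:sigma}.b rather than proves independently; you would need to establish it directly via Lemmas \ref{l:lkpon} and \ref{l:mupt}, a computation comparable in substance to the Whitehead-link calculation, and you do not carry it out.
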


Here

$\bullet$ the formula for $r_-\sigma$ is obvious;

$\bullet$ the formula for $\lambda_-\sigma$ follows since in the definition of $\sigma$ the restrictions of $g$ and $f$ to the second component $-1\times S^q$ are homotopic as maps to
$$S^m-g(1\times S^q) = S^m-\inc(1\times S^q) \sim S^m-\inc(D^1\times S^q) \sim S^m-\inc(0\times S^q) = S^m-f(1\times S^q);$$

$\bullet$ part (b) is non-trivial and is proved in \S\ref{s:prokirby} (together with the following Corollaries \ref{t:nonis}-\ref{t:tworesb}).

\smallskip
{\bf Definition of $s_\pm$.}
For an embedding $g:S^q\to S^m$ let $s_\pm(g)$ be any link whose components are contained in disjoint balls,
whose restriction to $\pm1\times S^q$ is $g$ and whose restriction to the other component is the standard embedding.
Define $s_\pm[g]:=[s_\pm(g)]$.

\begin{Corollary}\label{t:nonis}
If $q=4k-1$ and $m=6k$, then $\im(\sigma\zeta-\zeta)\not\subset\im(s_+\oplus s_-)$.

In other words, there is $x\in\pi_{4k-1}(S^{2k})$ such that no link representing $\sigma\zeta x-\zeta x$ is
piecewise-smoothly isotopic to the standard link.
\end{Corollary}

The analogue of Corollary \ref{t:nonis} for $\sigma\zeta+\zeta$ instead of $\sigma\zeta-\zeta$ follows just because $\lambda_-\sigma=\lambda_-$ by Theorem \ref{p:sigma}.a.
Corollary \ref{t:nonis} is non-trivial because of $\lambda_-\sigma=\lambda_-$ (Theorem \ref{p:sigma}.a) and Lemma \ref{l:sigmala}.
%while $\lambda_+\sigma$ is harder to calculate than $\Sigma\lambda_+\sigma$.

\begin{Corollary}\label{t:twores}
If $q=4k-1$ and $m=6k$, then the following map is an isomorphism:
$$r\oplus\sigma|_K\oplus s_-:E^m(D^1\times S^q)\oplus K\oplus E^m(S^q)\to E^m(T^{0,q}).$$
Here $r$ is the restriction map, $K:=\ker(\lambda_+\oplus r_+\oplus r_-)$, and the sum operation on
$E^m(D^1\times S^q)$ is `embedded connected sum of $q$-spheres together with normal vector fields', see detailed construction in \cite[\S2.1]{Sk15}.
\end{Corollary}

{\bf Definition of $\psi_\pm$.}
Let $\psi_\pm$ be the `change of the orientation of $\pm1\times S^q$' self-map of $E^m(T^{0,q})$.

\smallskip
{\bf Comment.}
The map $\psi_+$ is described for $q=4k-1$ and $m=6k$ by Symmetry Lemma \ref{l:symkno}.b and Proposition \ref{r:hae}.c.
A description of $\psi_-$ is analogous.
\aronly{The map $\sigma'$ of the Comment before Theorem \ref{p:sigma} equals $\psi_+\sigma\psi_+$.}

\smallskip
The map $\psi_-r$ can be considered as the restriction map corresponding to the `boundary inclusion' $T^{0,q}\to D^1\times S^q$, for which the orientations on the components of $T^{0,q}$ are the boundary orientations
(as opposed to the ordinary `product inclusion' used for $r$).
% (as opposed to the product orientations).

\smallskip
{\bf Comment.}
Observe that $\#\psi_-r=0$ for any $m,q$ where $\#$ is defined, because a representative of $\#\psi_-r$ spans a ball in $\R^m$.
This is different from $\#r\ne0$ (which follows because Corollary \ref{l:cortre}.a implies that $\#\zeta\ne0$, and $\zeta=r\tau$ for certain map $\tau$, cf. the text before Lemma \ref{l:sigma}).

\begin{Corollary}\label{t:tworesb}
If $q=4k-1$ and $m=6k$, then the following map is not surjective:
$$\psi_-r\oplus\sigma|_K\oplus s_-:E^m(D^1\times S^q)\oplus K\oplus E^m(S^q)\to E^m(T^{0,q}).$$
\end{Corollary}

Concerning low-dimensional version of $\#$ and $\sigma$ see \aronly{Remark \ref{r:nonis}.}
\jonly{\cite[Remark 5.6]{Sk24}.}

\section{Proof of Theorem \ref{l:cosu}}\label{s:procosu}

%and Lemma \ref{l:sym} Replace $\cs=\phantom{}_{\inc}\#$ by $s_-$

Our proof of Connected Sum Theorem \ref{l:cosu} (and of Corollary \ref{l:cortre}, Theorem \ref{p:sigma}.b) is not by directly using definition of the isomorphism $E^{6k}(S^{4k-1})\to\Z$ (or of the map $H\lambda_+:E^{6k}(T^{0,4k-1})\to\Z$); we use known classification of $E^{6k}(T^{0,4k-1})$.
It would be interesting to obtain a direct proof.

First we recall known results used in our proof.

Let $\Sigma:\pi_{4k-1}(S^{2k})\to\pi_{4k}(S^{2k+1})$ the suspension homomorphism.

\begin{Theorem}[Haefliger]\label{t:hae} The following map is a monomorphism:
$$\lambda_+\oplus \lambda_-\oplus r_+\oplus r_- :
E^{6k}(T^{0,4k-1}) \to \pi_{4k-1}(S^{2k})\oplus\pi_{4k-1}(S^{2k})\oplus\Z\oplus\Z.$$
Its image is the set of quadruples $(a_+,a_-,b_+,b_-)$ such that $\Sigma(a_++a_-)=0$.
\end{Theorem}

See \cite[Theorem in \S6]{Ha62l}, \cite[Theorem 2.4 and appendix]{Ha66C}, \cite[Remark 7.2.b]{Sk16h}.

\begin{Theorem}[{\cite[Lecture 6, (7)]{Po85}}]\label{t:whit} The kernel $\ker H$ is finite.
The kernel $\ker\Sigma$ is generated by an element $w$ such that $Hw=2$.
%$w:=[\iota_{2k},\iota_{2k}]\in\pi_{4k-1}(S^{2k})$ generates [Whitehead]?\cite[\S16.4, Lemma 2]{FF89};
\end{Theorem}

\begin{Proposition}\label{r:hae}
(a) The following map has a finite kernel:
$$H\lambda_+\oplus H\lambda_- \oplus r_+ \oplus r_- : E^{6k}(T^{0,4k-1}) \to \Z^4.$$

(b) The image of this map is $\{(a,b,c,d)\ :\ a\equiv b\mod2\}$ for $k=1,3,7$, and is
$\{(a,b,c,d)\ :\ a\equiv b\equiv0\mod2\}$   otherwise.

(c) The following map is a monomorphism
$$H\lambda_+\oplus \lambda_- \oplus r_+ \oplus r_- : E^{6k}(T^{0,4k-1}) \to  \Z\oplus\pi_{4k-1}(S^{2k})\oplus\Z\oplus\Z.$$

(d) The image of this map is $\{(a,b,c,d)\ :\ a\equiv Hb\mod2\}$ for $k=1,3,7$, and is
$\{(a,b,c,d)\ :\ a\equiv Hb\equiv0\mod2\}$   otherwise.
\end{Proposition}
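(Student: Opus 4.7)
The plan is to combine Haefliger's classification of links $T^{0,4k-1}\to S^{6k}$ in the first non-metastable range $2m=3q+3$ with Adams' theorem on the image of the Hopf invariant. The key feature of this dimension is that $E^{6k}(T^{0,4k-1})$ has rank $4$ over $\Q$, whereas in the metastable range $2m\ge 3q+4$ it would have rank $3$: the three invariants $(\lambda_-,r_+,r_-)$ no longer suffice, and the extra $\Z$-factor is captured by $H\lambda_+$.

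For (c), I would invoke Haefliger's classification to conclude that the four homomorphisms $H\lambda_+,\lambda_-,r_+,r_-$ together separate isotopy classes of links, so the displayed map is injective. Part (a) then follows because $H:\pi_{4k-1}(S^{2k})\to\Z$ has finite (torsion) kernel, so replacing $\lambda_-$ by $H\lambda_-$ enlarges the kernel only by a finite amount.

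For (b) and (d), the key step is to derive a symmetry formula relating $\lambda_+$ and $\lambda_-$. Swapping the two components of a link via an ambient diffeomorphism of $S^{6k}$, and tracking its action on the linking coefficients, should yield an identity of the form
\[\lambda_+-\varepsilon\,\lambda_-\;\in\;\Z\cdot[\iota_{2k},\iota_{2k}]\;+\;\mathrm{torsion}\]
in $\pi_{4k-1}(S^{2k})$ for some sign $\varepsilon=\pm 1$, where $[\iota_{2k},\iota_{2k}]$ is the Whitehead square. Since $H[\iota_{2k},\iota_{2k}]=\pm 2$, applying $H$ yields $H\lambda_+\equiv H\lambda_-\pmod 2$, which is exactly the parity condition appearing in (b) and (d). Adams' theorem on Hopf invariant one then determines the image of $H$: all of $\Z$ for the exceptional values of $k$ listed in the statement, and $2\Z$ otherwise. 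Realization of quadruples satisfying these constraints is by standard constructions: Haefliger knots for the components, a torus-pushout for prescribed $\lambda_-$, and a specific link realizing a generator of the extra $\Z$-factor detected by $H\lambda_+$.

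The main obstacle is pinning down the symmetry formula precisely --- identifying the sign $\varepsilon$ and ruling out contributions from the extra $\Z$-factor that could spoil the mod-$2$ conclusion. Either a direct geometric swap argument, or an algebraic derivation from Haefliger's Puppe-type exact sequence for $E^m(T^{0,q})$, should suffice; the delicate bookkeeping of Whitehead-product contributions is the most technical part.
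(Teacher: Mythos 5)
Your strategy coincides with the paper's: everything is deduced from Haefliger's classification (Theorem \ref{t:hae}: $\lambda_+\oplus\lambda_-\oplus r_+\oplus r_-$ is a monomorphism whose image is $\{\Sigma(a_++a_-)=0\}$) together with the structure of $\ker\Sigma$ and of $\im H$ (Theorem \ref{t:whit} and Adams). However, your argument for (c) has a gap. Haefliger gives injectivity of the quadruple containing $\lambda_+$, not $H\lambda_+$; the whole content of (c) is that one may replace $\lambda_+$ by the single integer $H\lambda_+$. For this you must show that on $\ker(\lambda_-\oplus r_+\oplus r_-)$ the class $\lambda_+$ is determined by $H\lambda_+$. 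By Haefliger's image description, $\Sigma\lambda_+=-\Sigma\lambda_-=0$ there, so $\lambda_+\in\ker\Sigma$, and what is needed is that $H$ is injective on $\ker\Sigma$ --- equivalently, that $\ker\Sigma$ is the infinite cyclic group generated by (a correction of) the Whitehead square, \emph{with no torsion}. Your ``symmetry formula'' $\lambda_+-\varepsilon\lambda_-\in\Z\cdot[\iota_{2k},\iota_{2k}]+\mathrm{torsion}$ deliberately allows a torsion ambiguity; since $H$ kills torsion, that weaker statement suffices for the parity claims in (b) and (d) but is \emph{not} enough for the injectivity in (c): a nonzero torsion element of $\ker\Sigma$ would produce a nontrivial link killed by all four of your invariants. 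The sharper fact $\ker\Sigma=\Z w$ with $Hw=2$ (EHP sequence; Theorem \ref{t:whit} in the paper) closes this gap. With it, (a) follows from (c) since $\ker H$ is finite, and (b) follows from (d), as you say. For the image statements the paper does not re-realize quadruples by hand: it reads the image directly off Haefliger's description $\Sigma(a_++a_-)=0$ together with $H(\ker\Sigma)=2\Z$ and $\im H=\Z$ or $2\Z$ according to $k$, which avoids the ``delicate bookkeeping'' you anticipate.
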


\begin{proof}
%[Proof of Proposition \ref{r:hae}]
Part (a) holds by (c) and Theorem \ref{t:whit}.

Part (b) holds by (d).

Part (c) follows because $H\lambda_+x = \lambda_-x = 0$ implies that $\lambda_+x = 0$.
In the rest of this paragraph we prove the latter implication.
Since $\lambda_-x = 0$, by Theorems \ref{t:hae} and \ref{t:whit} $\lambda_+x = sw$ for some integer $s$.
Since $0 = H\lambda_+x = sHw = 2s$, we have $s=0$.

Part (d) holds by Theorems \ref{t:hae} and \ref{t:whit}, because $H$ is surjective for $k=1,3,7$, and  $H\pi_{4k-1}(S^{2k})=2\Z$ otherwise.
\end{proof}

\begin{proof}[Proof of Connected Sum Theorem \ref{l:cosu}]
Clearly, $\#,r_+,r_-,\lambda_+,\lambda_-$ are homomorphisms.
%By definition of the operation on $E^m(S^q)$,
The group $E^{6k}(T^{0,4k-1})$ is generated by (isotopy classes of) links whose components are contained in disjoint smooth balls, and by $K_0:=\ker(r_+\oplus r_-)$.
We have $\#=r_++r_-$ for the former links.
Hence it suffices to prove the theorem for links in $K_0$.

By Proposition \ref{r:hae}.a the map $H\lambda_+\oplus H\lambda_-:K_0\to\Z^2$ has finite kernel.
%\{(a_+,a_-)\in\Z^2 \ :\ a_+\equiv a_-\mod2\}$$
%\pi_{4k-1}(S^{2k})^2\ :\ \Sigma(a_++a_-)=0\}$$ is an isomorphism.
Since any homomorphism from a finite group to $\Z$ is zero, this kernel goes to 0 under the map $\#$.
Hence $\#|_{K_0}=n\circ(H\lambda_+\oplus H\lambda_-)$ for some homomorphism $n:\im(H\lambda_+\oplus H\lambda_-)\to\Z$.
So $\#|_{K_0}=n_+H\lambda_++n_-H\lambda_-$ for some $n_\pm=n_{\pm,k}\in\Q.$
Analogously to the commutativity of summation on $E^m(S^q)$ \cite[\S1.4]{Ha66A}, $\#$ is invariant under exchange of the components.
Hence $n_+=n_-$.
So by the following Whitehead Link Lemma \ref{l:whli} $n_+=\pm1/2$.
\end{proof}

%Recall that $\pi_{4k-1}(S^{2k})$ is the sum of $\Z$ and a finite group (the Serre Theorem).

\begin{Lemma}[Whitehead Link]\label{l:whli}
For any $l\ge2$ there is an embedding $\omega:T^{0,2l-1}\to S^{3l}$ such that
$$r_+\omega=r_-\omega=0,\quad\lambda_-\omega=0,\quad\text{and, for $l$ even, }\quad H\lambda_+\omega=\pm2,\quad \#\omega=1.$$
\end{Lemma}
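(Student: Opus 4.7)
My plan is to construct $\omega$ explicitly as a higher-dimensional Whitehead clasp and verify the four invariants in turn. For the construction I would adapt the classical Whitehead link: start from a split link whose components $f_\pm:S^{2l-1}\to S^{3l}$ are standard unknotted spheres bounding disjoint $(2l)$-disks $D_\pm$, then introduce a clasp inside a small $(3l)$-ball $B\subset S^{3l}$. Concretely, modify $f_-$ inside $B$ so that $f_-$ remains unknotted but its new bounding disk $D_-'$ crosses $D_+$ transversely in a pair of $(l-1)$-spheres of opposite algebraic sign, with the portion of $f_-$ between these two crossings realising the Whitehead square $[\iota_l,\iota_l]\in\pi_{2l-1}(S^l)$ relative to the normal framing of $f_+$.

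With $\omega$ in hand the three simpler invariants are verified as follows. Both components are unknotted by construction (the clasp may be undone for either component individually), so $r_\pm\omega=0$. The two oppositely-signed intersection spheres of $D_-'$ with $D_+$ can be cancelled by a Whitney move in codimension $l+1\ge 3$, exhibiting a (possibly singular) null-homotopy of $f_-$ in $S^{3l}\setminus f_+$, so $\lambda_-\omega=0$. The linking class $\lambda_+\omega\in\pi_{2l-1}(S^{3l}\setminus f_-)\cong\pi_{2l-1}(S^l)$ is read off directly from the clasp to be $[\iota_l,\iota_l]$, so $H\lambda_+\omega=H[\iota_l,\iota_l]=\pm 2$ for $l$ even by the classical Hopf-invariant formula for Whitehead products.

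The main obstacle is establishing $\#\omega=1$ in $E^{6k}(S^{4k-1})\cong\Z$ for $l=2k$, since this identifies a specific generator rather than a pairwise invariant. My plan is to build a singular Seifert disk for $\#\omega(S^{2l-1})$ by gluing $D_+$ and $D_-'$ along the connecting band of the connected sum: the resulting immersed $(2l)$-disk in $S^{3l}$ has transverse double points arising precisely from the two clasp intersections, and its algebraic double-point count computes the Haefliger invariant of $\#\omega\in E^{6k}(S^{4k-1})$. The delicate point is verifying that, after combining the two oppositely-signed clasp intersections via the band move and accounting for the Whitehead-product twist between them, the net algebraic double-point count equals $\pm 1$ rather than $0$ or a higher even integer; this is controlled precisely by $H[\iota_l,\iota_l]=\pm 2$, which measures how much the clasp fails to cancel geometrically after the band move. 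Matching the resulting immersed disk with Haefliger's standard description of the generator of $E^{6k}(S^{4k-1})$ as the boundary of an immersed $2l$-disk with a single transverse self-intersection then yields $\#\omega=\pm 1$, which we take as $+1$ after an orientation adjustment of the clasp.
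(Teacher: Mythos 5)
Your construction is essentially the paper's in different clothing: the paper uses the Borromean rings $S_1\sqcup S_2\sqcup S_3$, takes $\omega$ to be $S_1$ together with the tube sum of $S_2$ and $S_3$, and the boundary connected sum of the two disjoint embedded disks $D_{23},D_{32}$ plays exactly the role of your $D_-'$. The verifications of $r_\pm\omega=0$, $\lambda_-\omega=0$ and $H\lambda_+\omega=\pm2$ go through along the lines you indicate, the last one via the Pontryagin construction (Lemma \ref{l:lkpon}): the framed intersection $f_+\cap D_-'$ is a pair of individually null-cobordant framed $(l-1)$-spheres with linking number $\pm1$, whence $H=2\lk=\pm2$. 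Two local corrections, though. First, the transverse intersection $D_-'\cap D_+$ of two $2l$-disks in $S^{3l}$ is $l$-dimensional; the pair of $(l-1)$-spheres you need is $D_-'\cap f_+$, not $D_-'\cap D_+$. Second, your Whitney-move justification of $\lambda_-\omega=0$ is both unnecessary and false as stated: $\lambda_-\omega=0$ is immediate because $D_-'$ is a null-homotopy of $f_-$ missing $f_+$, whereas if the intersections of $D_-'$ with $D_+$ could really be cancelled, the two components would bound disjoint embedded disks and the link would be trivial, contradicting $H\lambda_+\omega=\pm2$; the impossibility of that cancellation is precisely the clasp.

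The genuine gap is the step $\#\omega=1$. A generic immersed $2l$-disk in $S^{3l}$ has self-intersection of dimension $2\cdot 2l-3l=l$ (equal to $2k$ for $l=2k$), not a finite set of transverse double points, so the ``algebraic double-point count'' on which your argument rests is not defined, and there is no description of the generator of $E^{6k}(S^{4k-1})$ as ``the boundary of an immersed $2l$-disk with a single transverse self-intersection'' that you could match against. Haefliger's invariant is defined via a bounding $4k$-manifold in $D^{6k+1}$ together with linking data of its self-intersection strata, and evaluating it on a concrete embedding is exactly the hard content of \cite[\S4]{Ha62k}, where Haefliger proves that the connected sum of the components of the Borromean rings generates $E^{6k}(S^{4k-1})\cong\Z$; the paper simply quotes that theorem for $\#\omega=1$. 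Note also that $\#\omega$ cannot be recovered from the other four invariants: Theorem \ref{t:hae} pins down the isotopy class of $\omega$ from $r_\pm\omega$ and $\lambda_\pm\omega$, but says nothing about the value of $\#$ on that class, and invoking the Connected Sum Theorem here would be circular since the lemma is used to prove it. So at this step you must either cite \cite[\S4]{Ha62k} or give an honest computation of the Haefliger invariant; your sketch does not supply one.
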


\begin{proof}[Proof of Lemma \ref{l:whli} except $H\lambda_+\omega=\pm2$] (The following construction of Borromean rings and their spanning disks is known \cite[\S4]{Ha62k},
and the proof modulo this construction is not hard.)

Recall that isotopy classes of embeddings $S^q\to S^n$ are in 1--1 correspondence with $h$-cobordism classes of oriented submanifolds of $S^n$ diffeomorphic to $S^q$ for $n\ge5$, cf. \cite[1.8]{Ha66A}.

Denote coordinates in $\R^{3l}\subset S^{3l}$ by $(x,y,z)=(x_1,\ldots,x_l,y_1,\ldots,y_l,z_1,\ldots,z_l)$.
The Borromean rings is the embedding whose image is disjoint union $S_1\sqcup S_2\sqcup S_3\to\R^{3l}$
of the three $(2l-1)$-spheres given by the following three systems of equations
$$\begin{cases} x=0\\ |y|^2+2|z|^2=1 \end{cases},\qquad
\begin{cases} y=0\\ |z|^2+2|x|^2=1\end{cases}\qquad\text{and}\qquad
\begin{cases} z=0\\ |x|^2+2|y|^2=1\end{cases}.$$
The embedding (up to isotopy) is defined by taking the orientations on the components as described in \cite[\S4]{Ha62k}.

Let $\omega:T^{0,2l-1}\to\R^{3l}$ be an embedding such that $\omega_{1\times S^{2l-1}}$ is (defined up to isotopy as) oriented $S_1$, and $\omega|_{-1\times S^{2l-1}}$ is embedded connected sum of oriented $S_2$ and $S_3$ along some tube $\tau\cong\partial D^{2l-1}\times I$ joining $S_2$ and $S_3$.

For $l$ even $\#\omega=1$ by \cite[\S4]{Ha62k}.

In this paragraph we prove that \emph{for each $i,j\in\{1,2,3\}$, $i\ne j$, there are disjoint $2l$-disks $D_{ij},D_{ji}\subset\R^{3l}$ bounded by $S_i$ and $S_j$, respectively}.
By symmetry, it suffices to prove this for $i=2$, $j=3$.
Take $2l$-disks $D_{23},D_{32}\subset\R^{3l}$ given by the equations
$$\begin{cases} y=0 \\ |z|^2+2|x|^2\le1 \end{cases}\qquad\text{and}\quad
\begin{cases} z_1\ge0 \\ z_2=\ldots=z_l=0 \\ |x|^2+2|y|^2+\frac12|z|^2=1\end{cases}.$$
These disks are bounded by $S_2$ and $S_3$, respectively.
On the intersection $D_{23}\cap D_{32}$ we have $2=2|x|^2+|z|^2\le1$, hence $D_{23}\cap D_{32}=\emptyset$.

Clearly, $r_+\omega=0$.
Since the spheres $S_2$ and $S_3$ bound disjoint embedded $2l$-disks $D_{23}$ and $D_{32}$, we have $r_-\omega=0$.

Take oriented embedded boundary connected sum of $D_{21}$ and $D_{31}$ by a half-tube $D^{2l-1}\times I$
disjoint from $S_1$, and such that $\partial D^{2l-1}\times I=\tau$.
We obtain a self-intersecting $2l$-disk bounded by $\omega(-1\times S^{2l-1})$ and disjoint from $S_1$.
Then $\lambda_-\omega=0$.
(An informal explanation for $\lambda_-\omega=0$ is that by making self-intersection of the last two of the Borromean rings, we can drag them apart from the first ring.)
\end{proof}

We identify by the Pontryagin isomorphism \cite[\S18.5]{Pr06} the group $\pi_q(S^n)$ and the set of framed cobordism classes of framed $(q-n)$-submanifolds of $S^q$.

\begin{Lemma}\label{l:lkpon}
(a) Let $f:T^{0,q}\to S^m$ be a link such that $r_+f=r_-f=0$.
%whose restrictions to the components are isotopic to the standard embedding.
Then $\lambda_+f$
%goes under Pontryagin isomorphism
is equal to the framed intersection of
%NO NEED an arbitrarily framed
$f(1\times S^q)$ and a general position arbitrarily framed embedded $(q+1)$-disk spanned by $f(-1\times S^q)$.

%Denote by $H\alpha$ the Hopf invariant of a framed $(2k-1)$-submanifold $\alpha$ of $S^{4k-1}$.

(b) If framed $(2k-1)$-submanifolds $\alpha,\beta$ of $S^{4k-1}$ are disjoint, then
\linebreak
$H(\alpha\sqcup\beta)=H\alpha+H\beta+2\lk_{S^{4k-1}}(\alpha,\beta)$.
\end{Lemma}

%where $f(-1\times S^q)$ is arbitrarily framed outside a point missing the $(q+1)$-disk
Part (a) is proved analogously to the particular case \cite[Lemma 4.1]{Av17}.
(Although the statement of \cite[Lemma 4.1]{Av17} involved Hopf invariant, the proof calculated $\lambda_+f$ not $H\lambda_+f$.)
Part (b) follows because $H\alpha$ is the linking number of $\alpha$ and the shift of $\alpha$ along the first vectors of the framing.
Although part (b) is not published, and part (a) is not published before \cite{Av17}, both parts are presumably folklore results known before \cite{Av17}.

\begin{proof}[Proof of Lemma \ref{l:whli}: proof that $H\lambda_+\omega=\pm2$ for $l$ even]
(This was stated without proof in \cite[end of \S6]{Ha62l}, and the proof presented below is not hard.)

Take oriented embedded boundary connected sum of $D_{23}$ and $D_{32}$ by a half-tube $D^{2l-1}\times I$
disjoint from $S_1$, and such that $\partial D^{2l-1}\times I=\tau$.
Since $D_{23}\cap D_{32}=\emptyset$, we obtain an embedded $2l$-disk bounded by $\omega(-1\times S^{2l-1})$.
Its intersection with $S_1$ is
$$\Sigma_1:=(D_{23}\cap S_1)\sqcup(D_{32}\cap S_1).$$
The intersection $D_{23}\cap S_1$ is transversal, and is the $(l-1)$-sphere given by $x=y=0$, $|z|^2=1/2$.
The intersection $D_{32}\cap S_1$ is transversal, and is the $(l-1)$-sphere given by
$$x=z_2=\ldots=z_l=0,\quad z_1=\sqrt{2/7},\quad |y|^2=3/7.$$
Take any normal framings on $D_{23}, D_{32}$ and $S_1$.
Consider $D_{23}\cap S_1$, $D_{32}\cap S_1$, and $\Sigma_1$ as framed intersections.
Take the orientations on $D_{23}\cap S_1$, $D_{32}\cap S_1$ corresponding to the framings.
Then
$$H\lambda_+\omega \overset{(1)}= H\Sigma_1 \overset{(2)}= 2\lk\phantom{}_{S_1}(D_{23}\cap S_1,D_{32}\cap S_1)
\overset{(3)}= \pm2.$$
%,\quad\text{where}
Here equality (1) holds by Lemma \ref{l:lkpon}.a applied to $f=\omega$ so that $f(1\times S^q)=S_1$ and the framed intersection is $\Sigma_1$.

Let us prove equality (2).
The spheres $S_1$ and $S_2$ bound disjoint disks $D_{12}$ and $D_{21}$.
So  the framed intersection $D_{23}\cap S_1$ is framed cobordant to zero.
Hence
%NO by Lemma \ref{l:lkpon}.a
$H(D_{23}\cap S_1)=0$.
Analogously $H(D_{32}\cap S_1)=0$.
Now Lemma \ref{l:lkpon}.b implies equality (2).

%Hence the Hopf invariant of the framed intersection $(D_{23}\cap S_1)\sqcup(D_{32}\cap S_1)$ equals up to the sign to twice the linking number in $S_1$ of oriented $(l-1)$-spheres $D_{23}\cap S_1$ and $D_{32}\cap S_1$.

Let us prove equality (3).
The $(l-1)$-sphere $D_{23}\cap S_1$ bounds in $S_1$ the $l$-disk given by
$$x=y_2=\ldots=y_l=0,\quad y_1\ge0,\quad y_1^2+2|z|^2=1.$$
The intersection of this $l$-disk and $D_{32}\cap S_1$ is the only point
$$x=y_2=\ldots=y_l=z_2=\ldots=z_l=0,\quad y_1=\sqrt{3/7},\quad z_1=\sqrt{2/7}.$$
This is a transversal intersection point.
This implies equality (3).
\end{proof}

\aronly{

\begin{proof}[Alternative proof of $H\lambda_-\omega=\pm2$ for $l$ even, using $\#\omega=1$]
By the Haefliger Theorem \ref{t:hae} $\Sigma\lambda_-\omega=-\Sigma\lambda_+\omega=0$.
Then by Theorem \ref{t:whit} $H\lambda_-\omega$ is an even number, say $2s$.
Since $\lambda_+\omega=0$ and $r_\pm\omega=0$, by Theorems \ref{t:hae} and \ref{t:whit} we have that   %$\omega=s(\lambda_+\oplus\lambda_-\oplus r_+\oplus r_-)^{-1}(0,w,0,0)$. So
$\omega$ is divisible by $s$.
Then $\#\omega$ is divisible by $s$.
Since $\#\omega=1$, we obtain $|s|=1$.
\end{proof}

\begin{proof}[Sketch of an alternative proof of Connected Sum Theorem  \ref{l:cosu} for $k=1$]
Take a representative $f:T^{0,3}\to S^6$ of an isotopy class from $E^6(T^{0,3})$.
Denote by $g$ a representative of $\#[f]$.
The formula follows by \cite[Theorem 4]{Wa66} because `the homology class of handle' $g_0\in H_4(M_g)$ `goes to' $f_++f_-$, so $[g]=(f_++f_-)^3/6$.
\end{proof}

}

\begin{proof}[Proof of Corollary \ref{l:cortre}.a]
Part (a) follows by Connected Sum Theorem \ref{l:cosu} and Lemma \ref{l:sym} below since $r_\pm\zeta=0$ and $\lambda_+\zeta=\id\pi_q(S^{m-q-1})$.
\end{proof}

\begin{Lemma}\label{l:sym} We have $H\lambda_-\zeta=H$ on $\pi_{4k-1}(S^{2k})$.
\end{Lemma}

Let $\iota_n\in\pi_n(S^n)$ be the homotopy class of the identity map.

\begin{proof}[Proof of Lemma \ref{l:sym}]
For a map $y:S^q\to S^{m-q-1}$ the link obtained from $\overline\zeta_y$ by exchange of components is isotopic to $\overline\zeta_{S\circ y}$, where $S$ is the symmetry of $S^{m-q-1}$ w.r.t the origin.
Then $\lambda_-\zeta[y]=((-1)^{m-q}\iota_{m-q-1})\circ[y]$.
So $H\lambda_-\zeta[y] = H((-\iota_{2k})\circ[y]) = H[y]$ by Lemma \ref{l:circ} below.
\end{proof}

\begin{Lemma}[known]\label{l:circ} For any $x\in\pi_{4k-1}(S^{2k})$ we have $H((-\iota_{2k})\circ x)=Hx$.
\end{Lemma}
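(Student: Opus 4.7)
The plan is to deduce Lemma \ref{l:circ} from the standard quadratic behavior of the Hopf invariant under postcomposition with self-maps of the target: for any $\phi : S^{2k} \to S^{2k}$ of degree $d$ and any $f : S^{4k-1} \to S^{2k}$, one has $H(\phi \circ f) = d^2 H(f)$. Since $-\iota_{2k}$ represents the inverse of $\iota_{2k}$ in $\pi_{2k}(S^{2k}) = \Z$, it has degree $-1$, so the formula yields $H((-\iota_{2k}) \circ x) = (-1)^2 H(x) = H(x)$.

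For the multiplicativity itself my preferred argument is via the cup-product description of the Hopf invariant, which is equivalent to the linking-of-preimages definition adopted in the paper. Write $C_f = S^{2k} \cup_f D^{4k}$, with generators $\alpha \in H^{2k}(C_f;\Z)$ and $\beta \in H^{4k}(C_f;\Z)$ satisfying $\alpha \smile \alpha = H(f)\beta$. For $g = \phi \circ f$, the map $\Phi : C_f \to C_g$ that equals $\phi$ on $S^{2k}$ and the identity on $D^{4k}$ is continuous because the two attaching maps agree via $\phi$; on cohomology $\Phi^*\alpha_g = d\alpha$ and $\Phi^*\beta_g = \beta$. Comparing $\Phi^*(\alpha_g \smile \alpha_g) = d^2 H(f)\beta$ with $\Phi^*(H(g)\beta_g) = H(g)\beta$ gives the identity.

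Alternatively one can stay entirely within the paper's framework: take a diffeomorphism representative of $-\iota_{2k}$ (say a reflection of $S^{2k}$) and choose regular values $p, q$. Then $g^{-1}(p) = f^{-1}(p')$ and $g^{-1}(q) = f^{-1}(q')$ as subsets of $S^{4k-1}$, where $p' = (-\iota_{2k})^{-1}(p)$ and analogously $q'$, but the Pontryagin framings are reversed because a degree $-1$ diffeomorphism reverses each tangent space. The induced orientations on both preimages flip, and bilinearity of the linking number in these orientations gives $H(g) = \mathrm{lk}(-f^{-1}(p'), -f^{-1}(q')) = H(f)$.

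No substantial obstacle is expected here; the lemma is a textbook consequence of the quadratic nature of the Hopf invariant. The only point requiring care is that in the Pontryagin-style argument both framings must be seen to reverse, so that the two sign changes cancel rather than compound.
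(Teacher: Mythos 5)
Your proposal is correct. Your primary route --- the quadratic law $H(\phi\circ x)=(\deg\phi)^2\,Hx$, established via cup products in the mapping cone $C_f=S^{2k}\cup_f D^{4k}$ and then specialized to $\deg\phi=-1$ --- is more general than what the paper does, and it is sound: the map $\Phi:C_f\to C_g$ is well defined since the attaching maps are compatible, and the computation $\Phi^*\alpha_g=d\alpha$, $\Phi^*\beta_g=\beta$ gives the identity. The paper instead argues directly with its own linking-number definition of $H$: reversing the orientation of the target $S^{2k}$ reverses the induced orientations on the preimages of both regular points, and the linking number is unchanged when both orientations flip. That is exactly your third, ``alternative'' paragraph, including the point you flag about the two sign changes cancelling. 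So the comparison is: your cup-product argument buys the full quadratic law at the cost of invoking the equivalence between the cup-product and linking-of-preimages definitions of the Hopf invariant, while the paper's (and your alternative) argument is shorter and stays entirely within the definition of $H$ adopted in the text.
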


\begin{proof}
Change of the orientation of $S^{2k}$ changes the orientations on preimages of regular points under a map $S^{4k-1}\to S^{2k}$.
Change of the orientation of both components preserves the linking number.
Hence change of the orientation of $S^{2k}$ preserves the Hopf invariant.
\end{proof}

%Alternatively, we have
%$$H((-\iota_{2k})\circ x)\overset{(1)}=H(-x+wHx)=-Hx+(Hw)Hx\overset{(3)}=Hx.$$
%Here (3) follows by $Hw=2$, and (1) is  \cite[Complement to Lecture 6, (10)]{Po85}
%because in Theorem \ref{t:whit} we can take  $w=[\iota_{2k},\iota_{2k}]$.
%Let $w=w_l:=[\iota_l,\iota_l]$, where $[\cdot,\cdot]$ is the Whitehead product. Recall that

\section{Proof of Theorems  \ref{t:three}.c,d}\label{s:prothree}

\begin{Lemma}\label{l:mupt} Let

$\bullet$ $g:S_1^{2l-1}\sqcup S_2^{2l-1}\sqcup S_3^{2l-1}\to S^{3l}$ be an embedding such that $\lambda_{23}(g)=\lambda_{32}(g)=0$, and

$\bullet$ $D_2,D_3\subset S^{3l}$ be disjoint oriented embedded $2l$-disks in general position to $g_1:=g|_{S^{2l-1}_1}$, and such that $g(S^{2l-1}_j)=\partial D_j$ for each $j=2,3$.

Then for $j=2,3$ the oriented preimage $g_1^{-1}D_j$ is a closed oriented $(l-1)$-submanifold of $S^{2l-1}_1$ missing $g_1^{-1}D_{5-j}$, and $\mu(g)=\lk_{S^{2l-1}_1}(g_1^{-1}D_2,g_1^{-1}D_3)$.
\end{Lemma}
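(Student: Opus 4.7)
The topological claims are immediate from general position: since $D_j$ has codimension $l$ in $S^{3l}$ and meets $g(S_1^{2l-1})$ transversally, $g_1^{-1}(D_j)$ is an $(l-1)$-dimensional submanifold of $S_1^{2l-1}$; it is closed because its only potential boundary $g_1^{-1}(g(S_j^{2l-1}))$ is empty (distinct components of $g$ have disjoint images), and orientations are induced from those of $D_j$, $S_1^{2l-1}$ and the ambient sphere. Disjointness $g_1^{-1}(D_2) \cap g_1^{-1}(D_3) = g_1^{-1}(D_2 \cap D_3) = \emptyset$ follows from $D_2 \cap D_3 = \emptyset$.

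My plan for the main assertion is to realize $\lambda^1(g)$ by an explicit Pontryagin--Thom collapse along $D_2, D_3$ and then read off $\mu$. Write $X := S^{3l} \setminus g(S_2^{2l-1} \sqcup S_3^{2l-1})$. Each $D_j$ is contractible, so its normal bundle in $S^{3l}$ is trivial; I would choose disjoint open tubular neighborhoods $N_j \cong \Int(D_j) \times \R^l$ and define
$$c : X \to S^l_2 \vee S^l_3$$
by sending $(x,v) \in N_j$ to the image of $v$ under the one-point compactification $\R^l \to S^l_j$, and collapsing $X \setminus (N_2 \cup N_3)$ to the wedge point. The composition $c \circ g_1 : S_1^{2l-1} \to S^l_2 \vee S^l_3$ then represents $\lambda^1(g)$ in the sense of the definition in the paper, and for a generic point $p_j \in S^l_j$ the preimage $c^{-1}(p_j)$ is a parallel copy of $\Int(D_j)$, so by general position $(c \circ g_1)^{-1}(p_j) = g_1^{-1}(D_j)$ up to isotopy in $S_1^{2l-1}$.

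It then remains to identify $\mu(g)$, the image of $[c \circ g_1]$ under the composition in the definition of $\mu$, with $\lk(g_1^{-1}(D_2), g_1^{-1}(D_3))$. This reduces to the standard fact that for any $f : S^{2l-1} \to S^l_2 \vee S^l_3$ the Whitehead-product coefficient in Hilton's decomposition
$$\pi_{2l-1}(S^l_2 \vee S^l_3) \cong \pi_{2l-1}(S^l_2) \oplus \pi_{2l-1}(S^l_3) \oplus \pi_{2l-1}(S^{2l-1})$$
equals, up to a universal sign, the linking number in $S^{2l-1}$ of the framed preimages $f^{-1}(p_2), f^{-1}(p_3)$. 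I would verify this by noting that the linking functional is well-defined (using $H_{l-1}(S^{2l-1}) = 0$), homotopy-invariant (via cobordism of preimages), and vanishes on the two single-sphere summands (one preimage becomes empty there), so factors through $\pi_{2l-1}(S^{2l-1}) = \Z$; and that it sends $[\iota_2, \iota_3]$ to $\pm 1$ by inspecting its standard representative as the attaching map of the top cell of $S^l_2 \times S^l_3$, whose two preimages are the Hopf-like spheres $\{p_2\} \times \partial D^l$ and $\partial D^l \times \{p_3\}$ in $\partial(D^l \times D^l) = S^{2l-1}$. Composition with the stable suspension, which is the identity on $\pi_{2l-1}(S^{2l-1}) = \pi^S_0$, then concludes the argument. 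The main obstacle I expect is the sign and framing bookkeeping: matching the orientations of $D_2, D_3, S_1$ with the Pontryagin framings of the preimages and with the Hilton-basis sign conventions, so that the stated equality $\mu(g) = \lk(g_1^{-1}(D_2), g_1^{-1}(D_3))$ holds with exactly the sign asserted by the lemma.
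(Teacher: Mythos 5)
Your proposal is correct and follows essentially the same route as the paper: the paper's entire proof is a citation of the ``well-known linking number definition of the Hopf--Whitehead invariant'' $\pi_{2l-1}(S^l\vee S^l)\to\Z$, and your Pontryagin--Thom collapse along tubular neighborhoods of $D_2,D_3$ plus the Hilton-decomposition argument (well-definedness, vanishing on the single-sphere summands, value $\pm1$ on $[\iota_2,\iota_3]$) is exactly a worked-out proof of that cited fact. The only caveat, which you already flag, is the orientation/sign bookkeeping needed to pin down the sign in $\mu(g)=\lk(g_1^{-1}D_2,g_1^{-1}D_3)$; the paper does not address this either.
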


This holds by the well-known `linking number' definition of the Hopf-Whitehead invariant
$\pi_{2l-1}(S^l\vee S^l)\to\Z$, see e.g. \cite[\S2, Sketch of a proof of (b1)]{Sk20e}.

\begin{proof}[Proof of Theorems \ref{t:three}.c,d]
Clearly, $\#_{23},r_i,\lambda_{ij}$ are homomorphisms.
Also $\mu$ is a homomorphism.
We have $\lambda_+\#_{23}=0$ and $\#\#_{23}=r_1+r_2+r_3$ for links whose components are contained in pairwise disjoint smooth balls.
Hence (analogously to the proof of Theorem \ref{l:cosu}) it suffices to prove (c,d) for links in $K_0:=\ker(r_1\oplus r_2\oplus r_3)$.

If $\lambda_{23}g=\lambda_{32}g=0$, then by Proposition \ref{r:hae}.c the spheres $g(S_2^{4k-1})$ and $g(S_3^{4k-1})$ bound disjoint embedded $4k$-disks.
Then (c) follows by Lemmas \ref{l:lkpon}.ab and \ref{l:mupt}.

%For $2m=3q+3>18$, $q\ne13$ there is an integer-valued invariant $\mu$ of isotopy classes of embeddings $S^q_1\sqcup S^q_2\sqcup S^q_3\to S^m$ such that $r_i,\lambda_{ij},\mu$ is a complete set of isotopy invariants
The sum
$$\mu\oplus\sum\limits_{(i,j)\in[3]^2,\ i\ne j}\lambda_{ij} : K_0 \to \Z\oplus\pi_{4k-1}(S^{2k})^6$$
%\{(a_+,a_-)\in \pi_{4k-1}(S^{2k})^6\ :\ \Sigma(a_++a_-)=0\}$$
has a finite kernel by \cite[Theorem 9.4]{Ha66C} and \cite[Lemma 1.3]{CFS}, see also \cite[Theorem 9.3.b]{Sk16h}.
(Note that this sum is a monomorphism for $k>1$ by \cite[Theorem in \S6]{Ha62l}, \cite[Theorem 9.4]{Ha66C}, see also \cite[Remarks 8.2ab and Theorem 8.3]{Sk16h}.)
Since any homomorphism from a finite group to $\Z$ is zero, (analogously to the proof of Theorem \ref{l:cosu})
by Proposition \ref{r:hae}.a and the case $\lambda_{23}g=\lambda_{32}g=0$ of (c) we have
$$H\lambda_+\#_{23}|_{K_0} = 2\mu+l_{23}H\lambda_{23}+l_{32}H\lambda_{32}+H\lambda_{12}+H\lambda_{13}\quad\text{for some}\quad l_{23},l_{32}\in\Q.$$
%m\mu+\sum\limits_{(i,j)\in[3]^2,\ i\ne j}l_{ij}H\lambda_{ij}\quad\text{for some}\quad l_{ij}=l_{ij,k}\in\Q.$$
%$H\lambda_+\#_{23} = \pm2\mu+H\lambda_{12}+H\lambda_{13}+ph\lambda_{23}+q(H\lambda_{21}+H\lambda_{31})$;
Analogously to the commutativity of summation on $E^m(S^q)$ \cite[\S1.4]{Ha66A}, $\#_{23}$ is invariant under exchange of the second and the third components.
By \cite[Theorem in p. 259]{HS64} $\mu$ is invariant under any permutation of all the three components.\aronly{\footnote{Clearly,
there is a typo in \cite[\S6, Theorem, (2)]{Ha62l} because the sign could not depend on the numbering.
Clearly, there is a typo in \cite[Theorem in p. 259]{HS64}: $p_1,p_2,p_3$ should be $i,j,k$, respectively.
Hence \cite[\S6, Theorem, (2)]{Ha62l} should read as $\lambda^i_{jk}=\lambda^j_{ik}=\lambda^k_{ij}$.}}
Hence $l_{23}=l_{32}$.

Then by Connected Sum Theorem \ref{l:cosu}
$$\#\#_{23}|_{K_0} =
\pm h_{23}\pm\frac12(H\lambda_{21}+H\lambda_{31}+2\mu+2l_{23}h_{23}+H\lambda_{12}+H\lambda_{13}).$$
Under any permutation of all the three components both $\#\#_{23}$ and $\mu$ remain the same.
Hence $l_{23}=0$.
\end{proof}

\aronly{It is not clear how to prove Theorems \ref{t:three}.c and \ref{p:sigma}.b directly in the PL category because its is not clear how to calculate directly $H\lambda_+\#_{23}$ and $H\lambda_+\sigma$; the calculations in \S\S \ref{s:procosu},\ref{s:prothree},\ref{s:prokirby} use Connected Sum Theorem \ref{l:cosu}, whose PL version is an `empty' result because any two PL embeddings $S^{4k-1}\to\R^{6k}$ are PL isotopic.}
%See though Remark \ref{r:doubling}.

\section{Proof of Theorem \ref{p:sigma}.b and Corollaries \ref{t:nonis}-\ref{t:tworesb}}\label{s:prokirby}

Part (a) of the following lemma should be compared to the analogous result \cite[\S3, Symmetry Remark]{Sk05} on embeddings $S^4\to S^7$, where situation is `the opposite'.

\begin{Lemma}[Symmetry]\label{l:symkno} (a) For any embedding $g:S^{4k-1}\to S^{6k}$ the composition with the
reflection-symmetry of $S^{6k}$ is isotopic to $g$.

Or, equivalently, for any embedding $g:S^{4k-1}\to S^{6k}$ the composition with the
reflection-symmetry of $S^{4k-1}$ represents a knot $-[g]\in E^{6k}(S^{4k-1})$.

(b) We have $r_-\psi_+=r_-$, $r_+\psi_+=-r_+$, $\lambda_+\psi_+=-\lambda_+$, and $H\lambda_-\psi_+=H\lambda_-$.
\end{Lemma}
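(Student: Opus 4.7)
The plan is to prove (a) first and then derive (b) mechanically from it. For (a), I first note that the two formulations are equivalent via the standard fact (analogous to \cite[\S1.4]{Ha66A}) that the inverse of $[g]$ in $E^{6k}(S^{4k-1})$ is represented by the composition of $g$ with any orientation-reversing self-diffeomorphism of $S^{4k-1}$, so it suffices to prove the first formulation: $[R_t\circ g]=[g]$ where $R_t$ is a hyperplane reflection of $S^{6k}$.

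For this, let $\tau\colon E^{6k}(S^{4k-1})\to E^{6k}(S^{4k-1})$ be $[g]\mapsto[R_t\circ g]$. Since $R_t$ is a diffeomorphism of $S^{6k}$ it commutes with embedded connected sum, so $\tau$ is an endomorphism of $E^{6k}(S^{4k-1})\cong\Z$; as $R_t^2=\id$ we get $\tau=\pm\id$. To pin the sign, I would test $\tau$ on $\#\zeta[x]$ for some $x\in\pi_{4k-1}(S^{2k})$ with $Hx\ne 0$ (take the Hopf class for $k=1,2,4$ and the Whitehead product $[\iota_{2k},\iota_{2k}]$ otherwise, whose Hopf invariant is $\pm2$). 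Write $L:=\zeta[x]$, so that $r_\pm(L)=0$, $H\lambda_\pm(L)=Hx$, and $\#L=\pm Hx\ne0$ by Corollary \ref{l:cortre}.a. Then $r_\pm(R_tL)=\tau r_\pm(L)=0$, and by Alexander duality the canonical identification $S^{6k}\setminus f(\mp1\times S^{4k-1})\simeq S^{2k}$ is changed by the degree $-1$ self-map after applying $R_t$ (the ambient orientation of $S^{6k}$ is flipped, while the orientation of the removed sphere is preserved as a pushforward), so $\lambda_\pm(R_tL)=(-\iota_{2k})\circ\lambda_\pm(L)$. Lemma \ref{l:circ} then yields $H\lambda_\pm(R_tL)=H\lambda_\pm(L)$, so the quadruple $(H\lambda_+,H\lambda_-,r_+,r_-)$ of $R_tL$ matches that of $L$. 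By Proposition \ref{r:hae}.a we have $[R_tL]=[L]$ modulo a finite subgroup, and applying $\#$ into the torsion-free group $\Z$ kills this subgroup, giving $\#(R_tL)=\#L$. Since $R_t$ commutes with $\#$, this reads $\tau(\#L)=\#L$, and as $\#L\ne0$ we conclude $\tau=\id$.

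Part (b) then follows by tracking definitions. The formula $r_-\psi_+=r_-$ is immediate because $\psi_+$ leaves the second component untouched. The formula $r_+\psi_+=-r_+$ is the equivalent form of (a) applied to the first component. The formula $\lambda_+\psi_+=-\lambda_+$ holds because for any based map $\varphi\colon S^{4k-1}\to X$ one has $[\varphi\circ R_s]=-[\varphi]$ in $\pi_{4k-1}(X)$, applied here in the complement of the other component. Finally, the identity $H\lambda_-\psi_+=H\lambda_-$ follows from the same Alexander duality observation as above: reversing the orientation of $f(+1\times S^{4k-1})$ changes the identification of its complement with $S^{2k}$ by a degree $-1$ self-map, hence $\lambda_-\psi_+=(-\iota_{2k})\circ\lambda_-$, and Lemma \ref{l:circ} converts this into $H\lambda_-\psi_+=H\lambda_-$.

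The main obstacle I anticipate is the Alexander duality bookkeeping that identifies the effect of $R_t$ (respectively $\psi_+$) on $\lambda_\pm$ as post-composition with $-\iota_{2k}$, which requires careful consistency with the orientation conventions fixed in \S\ref{s:stat}. Once this sign is pinned down, the rest is routine and the role of Lemma \ref{l:circ} is precisely to cancel the resulting degree $-1$ map when the Hopf invariant is taken.
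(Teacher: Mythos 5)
Your part (b) coincides with the paper's: $r_-\psi_+=r_-$ is immediate, $r_+\psi_+=-r_+$ is (a), $\lambda_+\psi_+=\lambda_+\circ(-\iota_{4k-1})=-\lambda_+$, and $H\lambda_-\psi_+=H((-\iota_{2k})\circ\lambda_-)=H\lambda_-$ by Lemma \ref{l:circ}. For part (a) you take a genuinely different route. The paper disposes of (a) in one line, by reading the invariance off from the definition of the Haefliger isomorphism $E^{6k}(S^{4k-1})\to\Z$ \cite[\S2]{Ha62l}, \cite[\S3]{Sk16s}. You instead argue indirectly: the reflection induces an endomorphism $\tau$ of $E^{6k}(S^{4k-1})\cong\Z$ with $\tau^2=\id$, hence $\tau=\pm\id$, and the sign is pinned down by evaluating on $\#\zeta[x]$ with $Hx\ne0$, using Corollary \ref{l:cortre}.a, Proposition \ref{r:hae}.a, and the fact that $\#$ kills torsion. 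I checked that none of these ingredients depends on the Symmetry Lemma, so there is no circularity, and the orientation bookkeeping $\lambda_\pm(R_t\circ f)=(-\iota_{2k})\circ\lambda_\pm(f)$ is exactly the kind of meridian-orientation argument the paper itself uses in Lemma \ref{l:sym}. What your approach buys is independence from the explicit construction of the Haefliger invariant --- in the spirit of the paper's own remark after Theorem \ref{l:cosu} that its arguments avoid that definition --- at the cost of invoking the heavier Proposition \ref{r:hae}.a and Corollary \ref{l:cortre}.a where the paper needs nothing.

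One repair is needed in your reduction between the two formulations of (a). The ``standard fact'' you quote --- that $-[g]$ is represented by $g$ composed with an orientation-reversing self-diffeomorphism of $S^{4k-1}$ --- is literally the second formulation of (a), so as written your equivalence argument is circular. The actual standard fact (cf. \cite[\S1.4]{Ha66A}) is that $-[g]$ is represented by $R_t\circ g\circ R_s$, i.e. by reversing \emph{both} the ambient and the source orientations; from this, $[R_t\circ g]=-[g\circ R_s]$, and the two formulations are indeed equivalent. Since your main argument establishes the first formulation and part (b) needs the second (for $r_+\psi_+=-r_+$), you do need this equivalence, so state the fact correctly; with that correction the proof is sound.
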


\begin{proof} (a) This follows by definition of the Haefliger isomorphism $E^{6k}(S^{4k-1})\to\Z$
\cite[\S2]{Ha62l}, \cite[\S3]{Sk16s}.

(b) The equation $r_-\psi_+=r_-$ is clear.
The equation $r_+\psi_+=-r_+$ holds by (a).
We have $\lambda_+\psi_+=\lambda_+\circ(-\iota_{4k-1})=-\lambda_+$.
We have $H\lambda_-\psi_+=H((-\iota_{2k})\circ\lambda_-)=H\lambda_-$ by Lemma \ref{l:circ}.
\end{proof}

In the rest of this section denote $\pi:=\pi_{4k-1}(S^{2k})$.

\begin{Remark}\label{r:chor}
For every $k>1$ there is an embedding $f:T^{0,4k-1}\to\R^{6k}$ whose restriction to each component is isotopic to the standard embedding, but which is not isotopic to the embedding obtained from $f$ by changing orientations of both components (i.e. to $\psi_+\psi_-f = \psi_-\psi_+f$).

This follows by taking $[f]=\zeta x$ for any $x\in\pi$ such that $Hx\ne0$ (e.g. $x=[\iota_k,\iota_k]$).
By Symmetry Lemma \ref{l:symkno}.b $(H\lambda_+,H\lambda_-)\psi_+=(-H\lambda_+,H\lambda_-)$.
Analogously $(H\lambda_+,H\lambda_-)\psi_-=(-H\lambda_+,-H\lambda_-)$.
Hence $(H\lambda_+,H\lambda_-)\psi_+\psi_- = (-H\lambda_+,-H\lambda_-)$.
By $\lambda_+\zeta=\id$ and Lemma \ref{l:sym} we have $H\lambda_+\zeta = H\lambda_-\zeta = H$.
So $[f]=\zeta x$ indeed works as an example.

It would be interesting to know if \emph{there is an ordered oriented link $f$ in 3-space whose restriction to each component is the unknot, but which is not isotopic to the link obtained from $f$ by changing orientations of both components}.
I am grateful to S. Chmutov and V. Mantourov for informing me that such an example is unknown.
\end{Remark}

\begin{proof}[Proof of Theorem \ref{p:sigma}.b]
Theorem \ref{p:sigma}.b follows because on $\ker r_+$ we have
$$2r_- \overset{(1)}= 2\#\psi_+\sigma \overset{(2)}= (2r_- \pm H\lambda_+ \pm H\lambda_-)\psi_+\sigma \overset{(3)}= (2r_- \mp H\lambda_+ \pm H\lambda_-)\sigma \overset{(4)} =$$
$$= 2\# \mp H\lambda_+\sigma \pm H\lambda_- \overset{(5)} =
2r_- \pm H\lambda_+ \pm H\lambda_- \mp H\lambda_+\sigma \pm H\lambda_-,\quad\text{where}$$

$\bullet$ equality (1) holds because two copies of the first component having opposite orientations `cancel';

$\bullet$ equality (3) holds by Symmetry Lemma \ref{l:symkno}.b;

$\bullet$ equality (4) holds by Theorem \ref{p:sigma}.a;
%i.e. because $r_-\sigma=\#$ and $\lambda_-\sigma=\lambda_-$.

$\bullet$ equality (5) holds by Connected Sum Theorem \ref{l:cosu} because $r_+=0$ on $\ker r_+$.

Let us prove equality (2).
On $\ker r_+$ we have $r_+=r_+\sigma=0$.
Change of the orientation of the \emph{standard} embedding $S^q\to\R^m$ gives embedding $S^q\to\R^m$ isotopic to the standard one.
Hence $r_+\psi_+\sigma=0$.
Now equality (2) holds by Connected Sum Theorem \ref{l:cosu}.
\end{proof}

\aronly{
\begin{proof}[Sketch of an alternative proof of Theorem \ref{p:sigma}.b for $k=1$]
Take a representative $f:T^{0,4k-1}\to S^{6k}$ of an element from $\ker r_+$.
Take a representative $g$ of $\sigma[f]$.
Analogously to \cite[\S4]{Wa66} there is a unique
%normal
framing of $f$ such that $p_k(M_f)=0$ for the $6k$-manifold $M_f$ obtained from $S^{6k}$ by surgery along $f$ with this framing.
%Let $e_\pm:=AD_{f_\pm}[T^{0,3}_\pm]\in H_4(C_\pm,\partial)$ be the `homology Seifert surface'.
Denote by $f_\pm\in H_{4k}(M_f)$ `the homology classes of handles'.
Analogously to \cite[Theorem 4]{Wa66}, \cite{Sk06'}
$H\lambda_\pm[f]=f_\pm f_\mp^2$ and $6r_\pm[f]=f_\pm^3$.
There is `sliding handles' diffeomorphism $M_f\to M_g$.
Under this diffeomorphism $g_+,g_-$ go to $f_+,f_++f_-$.
Since $r_+[f]=0$, we obtain the required relations.\footnote{Sketches of alternative proofs of Theorems \ref{l:cosu} and \ref{p:sigma}.b would work for any $k$ if one proves higher-dimensional analogue of \cite[\S4]{Wa66}.}
\end{proof}
}

\begin{Remark}\label{r:doubling}
Let $D:\ker r_+\to E^m(S^q_1\sqcup S^q_2\sqcup S^q_3)$ be the `doubling' of the first component.
The map $D$ is well-defined and is a homomorphism \cite[Lemmas 3.1--3.3]{Sk11}.
%(no need???)
Clearly,
$$\lambda_{12}D=\lambda_{21}D=0,\quad \lambda_{13}D=\lambda_{23}D=\lambda_+,\quad\text{and}\quad\lambda_{31}D=\lambda_{32}D=\lambda_-.$$
Clearly, $\sigma=\#_{23}D$.
Hence for $m=6k$, $q=4k-1$ and $k>1$ by Theorems \ref{t:three}.c and \ref{p:sigma}.b we have
%[in accordance with Theorem \ref{p:sigma}.b]
%$$H\lambda_+\sigma = H\lambda_+\#_{23}D \underset2 \equiv H(\lambda_{12}+\lambda_{13})D = H\lambda_+.$$
$$H\lambda_++2H\lambda_- = H\lambda_+\sigma = H\lambda_+\#_{23}D =
\pm2\mu D+H(\lambda_{12}+\lambda_{13})D = \pm2\mu D+H\lambda_+.$$
Hence $\mu D=\pm H\lambda_-$.
%(It would be interesting to obtain a direct proof of this equality, without using Theorems \ref{t:three}.c and \ref{p:sigma}.b.)
%together with a proof of Theorems \ref{t:three}.c not using Theorem \ref{l:cosu}, will presumably give a proof of Theorem \ref{p:sigma}.b not using Theorem \ref{l:cosu}, i.e. a PL proof of a PL result.
\end{Remark}

\begin{proof}[Proof of Corollary \ref{t:nonis}]
We have $\lambda_+s_\pm =0$.
So for the first statement it suffices to prove that $H\lambda_+(\sigma\zeta-\zeta) w\ne0$.
This follows because
$$H\lambda_+\zeta w \overset{(1)}= Hw = 2\ne6 = 3Hw \overset{(4)}= (H\lambda_++2H\lambda_-)\zeta w \overset{(5)}= H\lambda_+\sigma\zeta w,\quad\text{where}$$

$\bullet$ (1) follows because $\lambda_+\zeta=\id\pi$;

%$\bullet$ (2) and (3) are well-known ;

$\bullet$ (4) follows because $\lambda_+\zeta=\id\pi$ and by Lemma \ref{l:sym};

$\bullet$ (5) follows by Theorem \ref{p:sigma}.b.

The statement `in other words' follows from the first statement by \cite[Theorem 2.4]{Ha66C}, see also \cite[Theorem 7.1]{Sk16h}.
\end{proof}

%{\bf Remark} to Corollary \ref{t:nonis}.
%We have $\lambda_+\sigma\psi_-\zeta x=(-\iota_{m-q-1})\circ x$, so $H\lambda_+\sigma\psi_-\zeta=H$.
%The former holds because two copies of the first component having opposite orientations `cancel', and $\overline\zeta_x^{-1}D^{q+1} = \overline\zeta_x^{-1}\Phi$ as framed submanifolds of $S^q$, where $\Phi$ is spanned by the second component of the pictured representative of $\sigma\zeta[x]$.
%This does not allow to calculate $\lambda_+\sigma\zeta$ because $\lambda_+\zeta=\lambda_-\zeta$??? but $H\lambda_+\psi_-\zeta=H$ and $\lambda_-\psi_-\zeta=-\lambda_-\zeta$.

In the rest of this section assume that $m=6k$, $q=4k-1$, and denote (see the text after Connected Sum Theorem \ref{l:cosu})
$$h_{+,-}:=\frac12H(\lambda_++\lambda_-),\quad E=E^m(S^q)\cong\Z.$$
%,\quad\text{and}\quad \pi=\pi_q(S^{m-q-1}).$$
Denote by $\tau:\pi\to E^m(D^1\times S^q)$ the map essentially constructed in the construction of the Zeeman map $\zeta$, so that $r\tau=\zeta$.

\begin{Lemma}\label{l:sigma}  (a) On $K$ we have $ h_{+,-}\sigma=3h_{+,-}$.

(b) We have $h_{+,-}\psi_-\zeta=0$.

(c) We have $h_{+,-}\zeta=H$.

(d) The sequence $\pi \overset\tau \to E^m(D^1\times S^q) \overset{r_+r}\to E$ is exact.
%(is surjectivity of $r_+r$ required?): \to0
\end{Lemma}

\begin{proof} (a) On $K$ we have $h_{+,-}\sigma = H\lambda_-+\frac12H\lambda_- = 3h_{+,-}$ by Theorem \ref{p:sigma}.

(b) We have
$$h_{+,-}\psi_-\zeta \overset{(1)}= \frac12H(\lambda_+-\lambda_-)\zeta \overset{(2)}=
H-H=0,\quad\text{where}$$

$\bullet$ (1) holds because $\lambda_-\psi_-=-\lambda_-$ and $H\lambda_+\psi_-=H\lambda_+$
%analogously to
by Symmetry Lemma \ref{l:symkno}.b;

$\bullet$ (2) holds because $\lambda_+\zeta=\id\pi$ and by Lemma \ref{l:sym}.

(c) We have $h_{+,-}\zeta = \frac12(H+H)=H$ because $\lambda_+\zeta=\id\pi$ and by Lemma \ref{l:sym}.

(d) This holds by \cite[Theorem 2.5]{Sk11}, \cite[Theorem 1.7]{Sk15}.
\end{proof}

\begin{proof}[Proof of Corollary \ref{t:tworesb}]
Consider the following diagram:
$$E^m(D^1\times S^q)\oplus K\oplus E \overset{\psi_-r\oplus\sigma|_K\oplus s_-}\to E^m(T^{0,q}) \overset{r_+\oplus h_{+,-}}\to E\oplus\Z.$$
The map $r_+\oplus h_{+,-}$ is surjective by Propositions \ref{r:hae}.a,b.
We have $r_+s_-=0$ and $\lambda_+s_-=\lambda_-s_-=0$, so $h_{+,-}s_-=0$.
Hence it suffices to prove that $\theta:=(r_+\oplus h_{+,-})(\psi_-r\oplus\sigma|_K)$ is not surjective.

We have $r_+\psi_-r\tau=r_+\sigma=0$.
So by Lemma \ref{l:sigma}.a,b $\theta(\im\tau\oplus K)\subset0\oplus3\Z$.
Since $r_+\psi_-r=r_+r$, by Lemma \ref{l:sigma}.d we can apply the following simple result to
$A=E^m(D^1\times S^q)\oplus K$, $\alpha=h_{+,-}(\psi_-r\oplus\sigma|_K)$, and $G=\im\tau\oplus K$, so that
$A/G$ is a subgroup of $E\cong\Z$.

\emph{If $G$ is a subgroup of an abelian group $A$, homomorphism $\alpha:G\to\Z$ is not surjective, and $A/G$
%\cong\Z
is a subgroup of $\Z$, then no extension $A\to\Z\oplus\Z$ of
%the sum of $\theta$ and the zero map
$0\oplus\alpha$ is surjective.}

We obtain that $\theta$ is not surjective
\end{proof}

\begin{Lemma}\label{l:sigmala} We have $\Sigma\lambda_+\sigma=\Sigma\lambda_+$.
\end{Lemma}

\begin{proof} We have $\Sigma\lambda_+\sigma=-\Sigma\lambda_-\sigma=-\Sigma\lambda_-=\Sigma\lambda_+$
by Theorem \ref{p:sigma}.a and the Haefliger Theorem \ref{t:hae}.
\end{proof}

\begin{proof}[Proof of Corollary \ref{t:twores}]
%that $r\oplus\sigma|_K\oplus s_-$ is an isomorphism
\aronly{\footnote{
%This proof corrects a mistake in \cite[proof of Theorem 2.8.a]{Sk15}.
The notation $r$ of this paper agrees with \cite{Sk15} and so with earlier papers, but disagrees with arXiv version 1 of this paper.
The latter version has some confusion between $r$ and $\psi_-r$.}}
Consider the following diagram:
$$\xymatrix{
E^m(D^1\times S^q)\oplus K\oplus E \ar[rr]^(.6){r\oplus\sigma|_K\oplus s_-} \ar[d]_{\rho\oplus h_{+,-}\oplus\id E} & & E^m(T^{0,q}) \ar[d]^{\gamma:=r_+\oplus\lambda_+\oplus h_{+,-}\oplus r_-}\\
(E\oplus\pi)\oplus\Z\oplus E \ar[rr]_{\varphi} & & E\oplus\pi\oplus\Z\oplus E
},\qquad\text{where}$$
$$\rho:=r_+r\oplus\lambda_+r\quad\text{and}\quad
\varphi(x,y,z,t)\ :=\ (x,\ y+2zw+\theta_1x,\ 3z+Hy+\theta_2x,\ t+x\pm z)$$
for some homomorphisms $\theta_1:E\to\pi$ and $\theta_2:E\to\Z$.
Here $\pm$ is the sign depending only on $k$, the same as in Connected Sum Theorem \ref{l:cosu}.

%Since $r_\pm s_\mp=0=r_\pm\zeta$ and $r_\pm s_\pm=\id E$, we have $r_\pm\varkappa=0$.
%Since $\lambda_+\zeta=\id\pi$ and $\lambda_+s_\pm=0$, we have $\lambda_+\varkappa=0$.
%Hence $\varkappa E^m(T^{0,q})\subset K$ as required.

By Proposition \ref{r:hae}.c $\gamma$ is injective.
In the following paragraph we prove that
%the right vertical map denote it by
$\gamma$ is surjective.

Denote by $\overline\omega$ the link obtained from $\omega$ by the exchange of the components.
Define
$$\gamma':E\oplus\pi\oplus\Z\oplus E\to E^m(T^{0,q})\quad\text{by}\quad
\gamma'(a,b,c,d):=s_+a+\zeta b+(c-Hb)\overline\omega+s_-d.$$
The map $\gamma$ is surjective because
$$\gamma\circ\gamma' = r_+s_+\oplus \lambda_+\zeta\oplus h_{+,-}\gamma' \oplus  r_-s_- =
\id E\oplus\id\pi\oplus\id\Z\oplus\id E\quad\text{because}$$
$$r_\pm\zeta=r_\pm \omega=r_+s_-=r_-s_+=0,\quad \lambda_+s_+ = \lambda_+s_- = \lambda_+\overline\omega = 0,\quad
h_{+,-}s_+ = h_{+,-}s_- = 0,$$
$$\text{and}\quad h_{+,-}(\zeta b+(c-Hb)\overline\omega) = Hb+(c-Hb) = c.$$

By Proposition \ref{r:hae}.c,d $h_{+,-}$ is an isomorphism.
Since $r\tau=\zeta$, we have $\lambda_+r\tau=\id\pi$ and $r_+r\tau=0$.
So by Lemma \ref{l:sigma}.d $\rho$ is an isomorphism.
(Cf. the case $p=0$ of \cite[Remark 1.8.a]{Sk15}.)
%In the following paragraph we prove that
Hence the left vertical map $\rho\oplus h_{+,-}\oplus\id E$ is an isomorphism.
%, i.e. $\rho$ and $h_{+,-}$ are isomorphisms.

%cf. Corollary \ref{t:twores}
%This classification is simpler than Corollary \ref{t:twores} but not related
%has no conjectural generalization to classification of knotted tori $S^p\times S^q\to S^m$.

In the following paragraph we prove that $\varphi$ is an isomorphism.

This $\varphi$ is a linear map whose matrix is
$$\left(\begin{matrix} 1 & \theta_1 & \theta_2 & 1 \\ 0& \id\pi & H & 0 \\ 0& 2w & 3 & \pm1 \\ 0& 0 & 0 & 1 \end{matrix}\right).$$
So it suffices to prove that the self-map
$\varphi'=\left(\begin{matrix}\id\pi& H \\ 2w & 3 \end{matrix}\right)$ of $\pi\oplus\Z$ is an isomorphism
%(because then the inverse of $\varphi$ is defined by
%$\varphi^{-1}(a,b,c,d):=(a, y-\theta_1(a)+2\theta_2(a)w, z-\theta_2(a)???, d-a\mp z???)$, where $(y,z)=A^{-1}(b,c)$).
By Theorem \ref{t:whit} the group $\pi$ is the sum of $\Z$ and a finite group.
The map $\varphi'$ maps the torsion subgroup of $\pi$ to itself isomorphically.
The determinant of $\varphi'$ on the free part is $3-2Hw=-1$.
Hence $\varphi'$ is an isomorphism.

Thus it suffices to prove that the diagram is commutative (for some $\theta_1,\theta_2$).

Take any $t\in E$.
Both compositions of the diagram map $(0,0,t)$ to $(0,0,0,t)$ because $r_+s_-=\lambda_\pm s_-=0$, and $r_-s_-t=t$.

In this paragraph we prove that the diagram is commutative on $K$.
Take any $u\in K$.
Denote $z:=h_{+,-}u$.
Then both compositions of the diagram map $(0,u,0)$ to $\varphi(0,0,z,0)=(0,2zw,3z,\pm z)$ because
%$$\varphi(0,0,z,0) = (0,2zw,3z,\pm z) = (r_+\oplus\lambda_+\oplus h_{+,-}\oplus r_-)\sigma u,$$

$\bullet$ $r_+\sigma=0$;

$\bullet$ on $K$ we have $\Sigma\lambda_+\sigma = \Sigma\lambda_+ = 0$ by Lemma \ref{l:sigmala}, so by Theorems \ref{p:sigma}.b and \ref{t:whit}
$$\lambda_+\sigma u = \frac12(H\lambda_+\sigma u)w = (H\lambda_-u)w = (2 h_{+,-}u)w = 2zw;$$

$\bullet$ $h_{+,-}\sigma=3 h_{+,-}$ on $K$ by Lemma \ref{l:sigma}.a; and

$\bullet$ $r_-\sigma=\#=\pm h_{+,-}$ on $K$ by Theorem \ref{p:sigma}.a and Connected Sum Theorem \ref{l:cosu}.

In the following two paragraphs we prove that the diagram is commutative on
\linebreak
$E^m(D^1\times S^q)$.
Any element of $E^m(D^1\times S^q)$ equals $\rho^{-1}(x,y)$ for some $x\in E$ and $y\in\pi$.

Both compositions of the diagram map $(\rho^{-1}(0,y),0,0)$ to $\varphi(0,y,0,0)=(0,y,Hy,0)$ because
$r\tau=\zeta$ and
$$r_+\zeta = r_-\zeta=0,\quad \lambda_+\zeta  = \id\pi\quad\text{and}\quad h_{+,-}\zeta = H$$
(the latter equality holds by Lemma \ref{l:sigma}.c).
%h_{+,-}\zeta = H$ by $\lambda_+\zeta=\id$ and Lemma \ref{l:sym}.

For any embedding $f:D^1\times S^q\to\R^m$ the restrictions $f|_{1\times S^q}$ and $f|_{-1\times S^q}$ are isotopic.
So $r_+r=r_-r$.
Define $\theta_1(x) := \lambda_+r\rho^{-1}(x,0)$ and $\theta_2(x):= h_{+,-}r\rho^{-1}(x,0)$.
Then both compositions of the diagram map $(\rho^{-1}(x,0),0,0)$ to $\varphi(x,0,0,0)=(x,\theta_1x,\theta_2x,x)$.
\end{proof}

\aronly{
\begin{Remark}[low-dimensional versions of $\#$ and $\sigma$]\label{r:nonis}
The definition of $\zeta$ after Connected Sum Theorem \ref{l:cosu} works for $m=q+2=3$ and gives the `standard link $\zeta(u):S^1\sqcup S^1\to S^3$ of linking number $u$'.
Let us give a less formal repetition of that construction for $m=q+2=3$.
Let $\zeta(0)$ is  be the trivial link.
Now assume that $u\ne0$.
The first component of $\zeta(u)$ is the standard $S^1=\inc_{3,1}(0\times S^1)$ in $S^3$.
The second component of $\zeta(u)$ is contained in $\inc_{3,1}(\partial D^2\times S^1)$, makes $|u|$ turns around $S^1\times 0$ and is oriented `parallel' to $\inc_{3,1}(0\times S^1)$ when $u>0$ and `opposite' to $\inc|_{S^1\times 0}$ when $u<0$.

%It would be interesting to know if $\sigma\zeta = \pm\zeta$ as maps $\Z\to E^3(S^1\sqcup S^1)$.

Define the link $\#\zeta(u)$ as in the definition of $\#$, taking for connected summation a band close to `standardly twisted' rectangle.
The isotopy class of $\#\zeta(u)$ is independent of the choice of such a band (as opposed to PL band).
It would be interesting to know which knot is $\#\zeta(u)$ (depending on $u$).

Define the link $\sigma\zeta(u)$ as in the definition of $\sigma$, taking for connected summation a band  close to `standardly twisted' rectangle.
The isotopy class of $\sigma\zeta(u)$ is independent of the choice of such a band (as opposed to PL band).
%Then $\sigma\zeta(\pm1)$ is isotopic to $\zeta(\pm1)$.  for $u\in\{0,1,-1\}$.
I conjecture that $\sigma\zeta(u)$ is not isotopic to $\zeta(u)$, at least for $|u|>1$.
Perhaps this conjecture could be (dis)proved using calculation of some invariant $\sigma$ \cite[Theorems 3 and 4.1]{Me20}.
\end{Remark}
}

%The link $\sigma\psi_+\zeta(u)$ is isotopic? to the following link $g(u):S^1\sqcup S^1\to S^3$.
%%(This simplicity does not make things simple.)
%%extends to higher dimensions
%%identify $S^1\times D^2$ with its image under the standard embedding into $\R^3$.
%The first component of $g(u)$ is $\inc|_{S^1\times 0}$.
%Represent $S^1$ as the union of two arcs $D^1_+$ and $D^1_-$ that intersect at two-points $+1,-1$.
%The second component of $g(u)$ is the (properly oriented) union of the following two paths:
%the first path in $S^1\times \partial D^2$ standardly joins $+1\times\partial D^2$ to $-1\times\partial D^2$ and makes $-u$ turns around $S^1\times 0$; the second path in $S^1\times \Int D^2$ standardly joins the ends of the first path.
%Then $g(\pm1)$ is isotopic to $\zeta(\pm1)$.
%I conjecture that $g(u)$ is not isotopic to $\zeta(u)$ for $|u|>1$.
%But this does not make the question  whether $\sigma\zeta = \pm\zeta$ simple.

\end{document}